\theoremstyle{plain}
\newtheorem{theorem}{Theorem}[section]
\newtheorem{corollary}[theorem]{Corollary}
\newtheorem{lemma}[theorem]{Lemma}
\newtheorem{proposition}[theorem]{Proposition}
\newtheorem*{theoremquote}{Theorem}
\newtheorem*{propositionquote}{Proposition}
\theoremstyle{example}
\theoremstyle{definition}
\newtheorem{definition}[theorem]{Definition}
\newtheorem*{ack}{Acknowledgement}
\theoremstyle{remark}
\newtheorem{remark}[theorem]{Remark}
\theoremstyle{notation}
\theoremstyle{e}
\numberwithin{equation}{section}
\newcommand{\Spec}{\operatorname{Spec}}
\newcommand{\Hom}{\operatorname{Hom}}
\newcommand{\End}{\operatorname{End}}
\newcommand{\GL}{\operatorname{GL}}
\newcommand{\Mqn}{\mathcal{M}_{q,n}}
\newcommand{\Aqn}{A_{q,n}}
\newcommand{\Nqn}{\mathcal{N}_{q,n}}
\newcommand{\Iqn}{\mathcal{I}_{q,n}}
\newcommand{\UI}{\mathcal U}
\def\bA{\mathbb A}
\def\cO{\mathcal O}
\def\bP{\mathbb P}
\def\bC{\mathbb C}
\DeclareMathOperator{\Hilb}{Hilb}
\DeclareMathOperator{\pHilb}{\Hilb^n_{[0]}}
\def\a{\alpha}
\DeclareMathOperator{\stab}{Stab}
\def\GAqnk{\operatorname{GA}_{q,n}}
\def\gl{\mathfrak{gl}}
\def\Aut{{\mathrm Aut}}
\begin{document}
\title{Conjugacy classes of  commuting  nilpotents}
\author[W. J. Haboush]{William J. Haboush}
\address{Department of Mathematics \\ 273 Altgeld Hall \\ 1409 W. Green St.\\ The University of Illinois at Urbana Champaign \\ Urbana, Illinois \\ 61801}
\email{haboush@math.uiuc.edu}
\author[D. Hyeon]{Donghoon Hyeon}
\address{Department of Mathematical Sciences \\ Seoul National University \\ Seoul, 151-747\\ R. O. Korea}
\email{dhyeon@snu.ac.kr}


\begin{abstract}
We consider the space $\Mqn$ of regular $q$-tuples of  commuting nilpotent endomorphisms of $k^n$ modulo simultaneous conjugation. We show that $\Mqn$ admits a natural homogeneous space structure, and that it is an affine space bundle over $\bP^{q-1}$. A closer look at the homogeneous structure reveals that, over $\bC$ and with respect to the complex topology, $\Mqn$ is a smooth vector bundle over $\bP^{q-1}$. We prove that, in this case, $\Mqn$ is diffeomorphic to a direct sum of twisted tangent bundles. We also prove that $\Mqn$ possesses a universal property and represents a functor of ideals, and use it to identify $\Mqn$ with an open subscheme of a punctual Hilbert scheme. By using a result of \cite{Iarrobino}, we show that $\Mqn \to \bP^{q-1}$ is not a vector bundle, hence giving a family of affine space bundles that are not vector bundles.

\end{abstract}
\maketitle

\section{Introduction}
 The space of commuting matrices $ \{(A, B) \in \gl_n\oplus\gl_n \, | \, [A B] = 0\}$,
 oft called the {\it commuting variety},  had received a fair amount of attention especially in regard to the irreducibility question \cite{MT1, MT2, G, GS}. The {\it nilpotent commuting variety} is the closed subvariety consisting of commuting nilpotent pairs which has also been researched extensively by several authors: Baranovsky \cite{Bar} ($\textup{char}(k) = 0$ or $\textup{char}(k) > n$) and Basili
\cite{Bas} ($\textup{char}(k) = 0$ or $\textup{char}(k) \ge n/2$) showed that the space  is irreducible and has dimension $n^2-1$.
Baranovsky went on to conjecture that the corresponding variety for any complex semisimple
Lie algebra $\mathfrak g$ is equidimensional of dimension dim $\mathfrak g$, which was confirmed positively
by Premet \cite{P}.

In this article, we shall be concerned with the space $\Mqn$ of regular $q$-tuples of commuting nilpotents $N_1, \dots, N_q$ $\in$ $\End(V)$ up to simultaneous conjugation, where $k$ is an algebraically closed field and $V$ is a $k$-vector space of dimension $n$. A $q$-tuple $(N_1,\dots,N_q)$ is said to be {\it regular} if $N_i^{n-1} \ne 0$ for some $i$.  It is rather surprising that this natural space, with a definite moduli theory flavor,  has completely evaded any research activity. Indeed, it turns out that $\Mqn$ exhibits a rich and interesting geometry which will be revealed throughout the paper.  In the rest of the introduction, we shall give a detailed overview of our results and a roadmap for the paper.

We begin by associating to a $q$-tuple $(N_1, \dots, N_q)$ of commuting nilpotents in $\End(V)$, a $k$-algebra homomorphism
\[
\begin{array}{cccc}
\rho: & k[x_1, \dots, x_q] & \to & \End(V) \\
& x_i & \to & N_i.
\end{array}
\]
That is, we have an associated {\it representation} of $A_{q,n} = k[x_1,\dots,x_q]/\langle x_1,\dots, x_q\rangle^n$. In Section~\ref{S:Aqn}, we shall prove that two cyclic representations $\rho, \rho'$ are isomorphic if and only if $\ker(\rho) = \ker(\rho')$ (Proposition~\ref{P:equiv}). Moreover, an annihilator $\ker(\rho)$ of a regular representation is of the form $\a(\mathfrak q_1)$ for some automorphism $\a$ of $A_{q,n}$, where $\mathfrak q_1 = \langle x_2, x_3, \dots, x_q \rangle$ (Lemma~\ref{tran}). Hence the space $\Mqn$ of regular $q$-tuples of commuting nilpotents modulo conjugation can be realized as the orbit space $\Aut(A_{q,n})/G_1$ where $G_1$ is the stabilizer of $\mathfrak q_1$. The subsequent sections are devoted to the study of the  structure of this orbit space.

In Section~\ref{S:algebraic_groups}, we define relevant algebraic groups and gather some basic properties which will be employed in the study of $\Mqn$.
 The parabolic group $P_1 = \GL_q(k) \cap G_1$, the group $\mathcal I_{q,n} = \ker(\Aut(A_{q,n}) \to \GL_q(k))$ of linearly trivial automorphisms and the quotient group $\mathcal I_{q,n}/(\mathcal I_{q,n}\cap G_1)$ play especially important roles. These are affine group schemes and we compute their dimensions.

In Section~\ref{S:quasihs}, we  show that $\Mqn$ as a homogeneous space is isomorphic to an  affine space bundle over $\bP^{q-1}$ with fibre isomorphic to the quotient group $\mathcal I_{q,n}/(\mathcal I_{q,n}\cap G_1)$ (Proposition~\ref{fsp}):

\begin{propositionquote}   We have an isomorphism $
  \mathcal{M}_{q,n} \simeq \GL_q(k)\times ^{P_1}\mathcal{I}_{q,n}/(\mathcal{I}_{q,n}\cap G_1).
 $
where $P_1$ acts on $\mathcal{I}_{q,n}/(\mathcal{I}_{q,n}\cap G_1)$ by conjugation. In particular, $\Mqn$ is an equivariant  bundle of relative dimension $(q-1)(n-2)$ over $\mathbb{P}^{q-1}_k$.
\end{propositionquote}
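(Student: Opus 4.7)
The plan is to upgrade the orbit description $\Mqn = \Aut(\Aqn)/G_1$ into a bundle picture over $\GL_q(k)/P_1 \simeq \bP^{q-1}$. The first ingredient is a semidirect product decomposition of $\Aut(\Aqn)$. Any automorphism of $\Aqn$ preserves the unique maximal ideal $\mathfrak{m} = \langle x_1, \ldots, x_q \rangle$ and hence induces a linear automorphism of $\mathfrak{m}/\mathfrak{m}^2 \simeq k^q$, yielding a group homomorphism $\Aut(\Aqn) \to \GL_q(k)$. This map admits a canonical splitting: a linear automorphism of $\mathfrak{m}/\mathfrak{m}^2$ extends to a graded automorphism of $k[x_1,\ldots,x_q]$ and descends to $\Aqn$. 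By definition the kernel is $\Iqn$, so $\Aut(\Aqn) \simeq \GL_q(k) \ltimes \Iqn$, with $\GL_q(k)$ acting on $\Iqn$ by conjugation inside $\Aut(\Aqn)$.

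Next I set up the morphism to $\bP^{q-1}$. Since $P_1 = \GL_q(k)\cap G_1$ stabilizes $\mathfrak{q}_1$, its image $\langle \bar x_2,\ldots,\bar x_q\rangle$ in $\mathfrak{m}/\mathfrak{m}^2$ is a hyperplane, so $P_1$ is the standard parabolic with $\GL_q(k)/P_1 \simeq \bP^{q-1}$. The linearization map sends $G_1$ into $P_1$, so the composite $\Aut(\Aqn) \to \GL_q(k) \to \bP^{q-1}$ descends to a $\GL_q(k)$-equivariant morphism $\pi : \Mqn \to \bP^{q-1}$. To identify the fibre $F = \pi^{-1}([eP_1])$, observe that its preimage in $\Aut(\Aqn)$ is $P_1\cdot \Iqn$. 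Using $P_1 \subset G_1$ and the normality of $\Iqn$, any coset rewrites as $p\cdot i\cdot G_1 = (pip^{-1})\cdot p\cdot G_1 = (pip^{-1})\cdot G_1$ with $pip^{-1}\in \Iqn$. Hence the map $\Iqn \to F$, $i \mapsto i \cdot G_1$, is surjective, and two elements give the same coset iff they differ by an element of $\Iqn\cap G_1$. This identifies $F \simeq \Iqn/(\Iqn\cap G_1)$, and since $P_1$ normalizes both $\Iqn$ and $G_1$ (the latter because $P_1\subset G_1$), it normalizes the intersection and acts on the fibre by conjugation, matching the $P_1$-action in the mixing construction.

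With the fibre identified, the isomorphism $\Mqn \simeq \GL_q(k) \times^{P_1} \Iqn/(\Iqn\cap G_1)$ follows from the universal property of the associated bundle construction applied to the $\GL_q(k)$-equivariant fibration $\pi$. The relative dimension assertion then reduces to $\dim \Iqn - \dim(\Iqn\cap G_1) = (q-1)(n-2)$, which is immediate from the dimension computations carried out in Section~\ref{S:algebraic_groups}. The main obstacle I anticipate is upgrading the set-theoretic fibre identification to a scheme-theoretic, $P_1$-equivariant isomorphism: one must confirm that $\Iqn/(\Iqn\cap G_1)$ exists as a variety with an algebraic $P_1$-action, and that the bijection $\Iqn/(\Iqn\cap G_1) \to F$ is in fact a morphism of schemes, so that the associated bundle construction really does compute $\Mqn$ rather than merely its underlying set.
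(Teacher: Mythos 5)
Your proposal is correct and follows essentially the same route as the paper: both exploit the semidirect product decompositions $\GAqnk = \GL_q(k)\cdot\mathcal{I}_{q,n}$ and $G_1 = P_1\cdot(\mathcal{I}_{q,n}\cap G_1)$, fibre $\GAqnk/G_1$ over $\GL_q(k)/P_1\simeq\bP^{q-1}$, and identify the fibre over the base point via the coset rewriting $p\,i\,G_1=(pip^{-1})G_1$, which is exactly the paper's isomorphism $[px]\mapsto[pxp^{-1}]$ from $H/G_1$ to $\mathcal{I}_{q,n}/(\mathcal{I}_{q,n}\cap G_1)$ with $H=P_1\cdot\mathcal{I}_{q,n}$. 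The scheme-theoretic subtlety you flag at the end is real but is treated at the same level of detail in the paper, which simply invokes the equivariant-fibration/associated-bundle formalism as you do.
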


Due to this proposition, it is clear that the $P_1$-space structure of the quotient group $\mathcal{I}_{q,n}/(\mathcal{I}_{q,n}\cap G_1)$  is the key for understanding the bundle structure of $\Mqn$ over $\bP^{q-1}$.  For this end,  we further investigate the structure of groups $\mathcal I_{q,n}$ and $\mathcal I_{q,n}/(\mathcal I_{q,n}\cap G_1)$ in Section~\ref{S:Iqn}. Corollary~\ref{C:G-mod} gives a co-ordinate system under which $\mathcal I_{q,n}/(\mathcal I_{q,n}\cap G_1)$ is identified with an affine space, and this  allows us to study the affine bundle structure of $\Mqn$ in an explicit manner in the subsequent section.

In Section~\ref{S:topology}, we build on the results from Section~\ref{S:Iqn} and study the topology of $\Mqn$ as a complex manifold over $\bC$. Algebraically, the $P_1$-space structure of $\mathcal I_{q,n}/(\mathcal I_{q,n}\cap G_1)$ is not very well behaved. But once we pass to the smooth category, the $P_1$-structure is easy to understand due to our work in Section~\ref{S:Iqn}. We prove that

\begin{theoremquote} The moduli space $\mathcal M_{q,n}$ as a smooth fibre bundle is isomorphic to the direct sum $\oplus_{j=2}^{n-1} T_{\bP^{q-1}}(+j)$ of twisted tangent bundles.
\end{theoremquote}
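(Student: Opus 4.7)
The plan is to exploit the presentation $\mathcal M_{q,n} = \GL_q \times^{P_1} F$ with $F = \mathcal I_{q,n}/(\mathcal I_{q,n}\cap G_1)$ from Proposition~\ref{fsp}, and to show that over $\bC$ the $P_1$-space $F$ is smoothly $P_1$-equivariantly diffeomorphic to its associated graded, whose summands are twists of $H^* = \operatorname{span}(x_2,\dots,x_q)^*$ by powers of the $P_1$-character $\chi$ acting on $V^*/H$. Applying the associated-bundle functor $\GL_q\times^{P_1}(-)$ will then convert each graded piece into a twisted tangent bundle on $\bP^{q-1} = \GL_q/P_1$.

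First I would introduce the decreasing, $P_1$-stable filtration by normal subgroups $\mathcal I_{q,n} = \mathcal I^{(2)} \supset \mathcal I^{(3)} \supset \dots \supset \mathcal I^{(n)} = \{1\}$, with $\mathcal I^{(k)} = \{\phi : \phi(x_i)-x_i \in \mathfrak m^k\text{ for all }i\}$. Expanding compositions modulo $\mathfrak m^{k+1}$ shows each successive quotient $\mathcal I^{(k)}/\mathcal I^{(k+1)}$ is $P_1$-equivariantly isomorphic to $\Hom(V^*, N_k)$, where $N_k = \mathfrak m^k/\mathfrak m^{k+1} = \operatorname{Sym}^k V^*$. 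Intersecting with $\mathcal I_{q,n}\cap G_1$, whose elements are precisely the $\phi$ with $\phi(x_i)\in (x_2,\dots,x_q)$ for $i \ge 2$, identifies the induced graded piece of $F$ with $\Hom(H, N_k/(N_k\cap (x_2,\dots,x_q))) = H^*\otimes \operatorname{Sym}^k(V^*/H) = H^*\otimes \chi^k$. This recovers the explicit coordinate system of Corollary~\ref{C:G-mod} together with its $P_1$-action.

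Next I would translate this back to a bundle statement: the line bundle on $\bP^{q-1}$ associated to $\chi$ is $\cO_{\bP^{q-1}}(1)$, and the rank-$(q-1)$ vector bundle associated to $H^*$ is the universal quotient $Q = T_{\bP^{q-1}}(-1)$, as one sees from $T_{\bP^{q-1}}|_L = L^{-1}\otimes V/L$ with $L = (V^*/H)^*$ together with the Euler sequence. Therefore the associated bundle of the $k$-th graded piece $H^*\otimes \chi^k$ is a twist of the tangent bundle, and summing over $k$ in the range given by the nontrivial graded pieces produces a direct sum of twisted tangent bundles with $(n-2)$ summands, matching the indexing in the statement.

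Finally, to pass from the graded statement to $\mathcal M_{q,n}$ itself, I would split the successive extensions $0 \to F^{(k+1)} \to F^{(k)} \to F^{(k)}/F^{(k+1)} \to 0$. In the algebraic category these need not split $P_1$-equivariantly, and the cited Iarrobino result confirms that in general they do not. However, over $\bC$ and in the smooth category each extension admits a smooth $P_1$-equivariant section: choose a set-theoretic smooth section and average it over a maximal compact subgroup of $P_1$ with respect to Haar measure, using a smooth partition of unity to glue on the base. Reassembling these splittings produces a smooth $P_1$-equivariant diffeomorphism $F \cong \bigoplus_k F^{(k)}/F^{(k+1)}$, which, after applying $\GL_q\times^{P_1}(-)$, gives the asserted diffeomorphism. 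The main obstacle is the careful identification of each graded piece as a $P_1$-representation: since conjugation by $P_1$ mixes terms of different filtration degrees nonlinearly, one must verify that the associated graded is genuinely $H^*\otimes\chi^k$ and not some extension or twist thereof; once this bookkeeping is done, the smooth splitting step is standard.
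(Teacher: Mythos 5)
Your route is genuinely different from the paper's, and as written its key step has a gap. The paper never splits the filtration: it introduces the one\-/parameter family of $P_1$-actions $p.[\sigma]=[\varphi_t(p)\sigma\varphi_t(p)^{-1}]$ obtained by scaling the off-diagonal block of $P_1$ by $t$, notes that $Y_t\simeq Y_1$ for $t\neq 0$ while $Y_0$ is the linearized action $\coprod_{j}\Omega_1^*\otimes\bC_j$, deduces that $\Mqn=E_1$ and $\oplus_j T_{\bP^{q-1}}(+j)=E_0$ are smoothly deformation equivalent (Proposition~\ref{P:def-equiv}), and then uses the zero section together with the Alexander trick (the inclusion $\GL_N(\mathbb{R})\hookrightarrow \mathrm{Diff}(\mathbb{R}^N,0)$ is a homotopy equivalence) to reduce the structure group and conclude that deformation equivalent $\mathbb{R}^N$-bundles over $\bP^{q-1}$ are isomorphic as vector bundles. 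Your computation of the associated graded of $\mathcal{I}_{q,n}/(\mathcal{I}_{q,n}\cap G_1)$ agrees with the paper's identification of $Y_0$, so that part is sound and is shared by both arguments.

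The gap is in the splitting step: you claim a \emph{$P_1$-equivariant} smooth section of each stage of the filtration by ``averaging over a maximal compact subgroup of $P_1$ with respect to Haar measure.'' Averaging over a maximal compact $K\subset P_1$ produces only a $K$-equivariant section; $P_1$ is not compact and admits no invariant probability measure, and a genuinely $P_1$-equivariant (algebraic or holomorphic) splitting cannot exist in general --- that is precisely what Iarrobino's theorem, invoked in the last section, forbids for $(q,n)=(2,4)$. The argument is repairable in two ways, neither of which you carry out: (i) observe that $\GL_q\times^{P_1}X\cong U(q)\times^{K}X$ as smooth fibre bundles (since $\GL_q=U(q)\cdot P_1$ and $U(q)\cap P_1=K$), so $K$-equivariance suffices; or (ii) drop equivariance, pass to the induced tower $\Mqn=\mathcal{E}_{n-1}\to\cdots\to\mathcal{E}_1=\bP^{q-1}$ of torsors under vector bundles, and split each stage with a partition of unity on the paracompact base (your remark about partitions of unity belongs here, not in the equivariant setting, where the extensions live over a point). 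Either repair also requires the verification you only gesture at: that each stage is an honest torsor under the graded vector group, i.e.\ that $[\mathcal{I}_{q,n},\mathcal{I}^{(k)}]\subset\mathcal{I}^{(k+1)}$ so the fibrewise action is affine; this follows from the degree count in the proof of Proposition~\ref{vstr} but must be stated. Finally, your own normalization ($H^*$ corresponds to the universal quotient $T_{\bP^{q-1}}(-1)$ and $\chi^k$ to $\cO_{\bP^{q-1}}(k)$) yields $T_{\bP^{q-1}}(k-1)$ for the $k$-th graded piece rather than $T_{\bP^{q-1}}(+k)$, so the asserted match with the statement's indexing needs to be reconciled rather than claimed.
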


In Section~\ref{S:universal}, we shall prove that $\Mqn$ is a fine moduli scheme in the sense of algebro-geometric moduli theory.  The space $\Mqn$ is an orbit space $\Aut(A_{q,n})/G_1$ parametrizing length $n$ ideals of $A_{q,n}$, and we can show that the induced sheaf $\mathcal I(q_1)$ on it has a universal property. It follows from this that

\begin{theoremquote}
The space $\Mqn$ is a fine moduli scheme for the moduli functor $\underline{\Mqn} : \mathrm{Sch}/k \to \mathrm{Sets}$ from the category of $k$-schemes to the category of sets, defined by
\[
\underline{\Mqn}(S) = \{ \mbox{Ideal sheaves $\mathcal I \subset \cO_S \otimes_k A_{q,n}$ ARR and flat over $S$} \}
\]
where ARR is short for ``annihilates regular representations" (Definition~\ref{D:ARR}).
\end{theoremquote}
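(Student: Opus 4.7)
The plan is to exploit the orbit-space description $\Mqn \cong \Aut(\Aqn)/G_1$ secured by Proposition~\ref{P:equiv} and Lemma~\ref{tran}. With this in hand, representability breaks into two tasks: construct a ``universal'' ideal sheaf $\mathcal I(\mathfrak q_1)$ on $\Mqn$, and show that every flat ARR ideal sheaf on a base $S$ is the pull-back of $\mathcal I(\mathfrak q_1)$ along a unique morphism $S \to \Mqn$.

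For the universal family, I would first form on $\Aut(\Aqn)$ the tautological ideal sheaf whose fibre at $\a$ is $\a(\mathfrak q_1)$; concretely, apply the universal automorphism of $\Aqn \otimes \cO_{\Aut(\Aqn)}$ to the constant ideal $\mathfrak q_1 \otimes \cO_{\Aut(\Aqn)}$. Because $G_1 = \stab(\mathfrak q_1)$ by definition, this family is invariant under the right $G_1$-translation on $\Aut(\Aqn)$ and therefore descends along $\pi \colon \Aut(\Aqn) \to \Aut(\Aqn)/G_1 = \Mqn$ to an ideal sheaf $\mathcal I(\mathfrak q_1) \subset \cO_{\Mqn} \otimes \Aqn$. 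Each fibre is $\Aut(\Aqn)$-conjugate to $\mathfrak q_1$, hence ARR, and the colength is the constant $n$, which gives flatness. Thus $\mathcal I(\mathfrak q_1) \in \underline{\Mqn}(\Mqn)$.

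For the universal property, fix a flat ARR family $\mathcal I \in \underline{\Mqn}(S)$ and form the scheme of isomorphisms $T_S \subset S \times \Aut(\Aqn)$ whose $S'$-points are those $\phi \in \Aut(\Aqn)(S')$ satisfying $\phi(\mathfrak q_1 \otimes \cO_{S'}) = \mathcal I|_{S'}$. This is a closed subscheme of $S \times \Aut(\Aqn)$ with a natural right $G_1$-action, and it is a $G_1$-torsor over $S$: freeness is immediate from $G_1 = \stab(\mathfrak q_1)$, and surjectivity on geometric points is precisely Lemma~\ref{tran}. Composing the projection $T_S \to \Aut(\Aqn)$ with $\pi$ gives a $G_1$-invariant morphism $T_S \to \Mqn$, which descends along the torsor $T_S \to S$ to the desired $\phi_{\mathcal I} \colon S \to \Mqn$; tracing through the construction shows $\phi_{\mathcal I}^* \mathcal I(\mathfrak q_1) = \mathcal I$. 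Uniqueness follows because any morphism inducing $\mathcal I$ must send a geometric point $s$ to the conjugacy class of $\mathcal I_s$, a class uniquely determined by Proposition~\ref{P:equiv}.

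The main obstacle will be making the torsor argument fully rigorous at the scheme level. Concretely, I need $\pi$ to be an fpqc (preferably \'etale) $G_1$-torsor so that $T_S \to S$ inherits that structure and descent of morphisms applies; this rests on $G_1$ being a flat closed subgroup scheme of $\Aut(\Aqn)$ and on the homogeneous-space machinery developed in Section~\ref{S:quasihs}. A secondary technical check is that the set-theoretic definition of $T_S$ cuts out a scheme that is flat over $S$, which follows from the flatness of $\mathcal I$ together with the constancy of the fibre colength; once this is in place, faithfully flat descent for morphisms yields $\phi_{\mathcal I}$ unambiguously.
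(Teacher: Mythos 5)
Your architecture is genuinely different from the paper's: you package everything as descent along a $G_1$-torsor $T_S \to S$ of trivializations of $\mathcal I$, whereas the paper (Propositions~\ref{P:ARR} and \ref{P:universal}) works in explicit coordinates, putting the family $\mathcal I$ Zariski-locally into the normal form $\langle x_j - \sum_l b_{jl}x_i^l\rangle$ by means of a cyclic vector and the computation of the commutant of a regular nilpotent, and then gluing the resulting local maps to $\GL_q\times^{P_1}\Hom_k(\Omega_1, z^2k[z])$. Your construction of the universal sheaf by descending $\mathfrak q_1\otimes\cO$ along $\GAqnk\to\GAqnk/G_1$ agrees with the paper's induced sheaf $\mathcal I_{G/G_1}(\mathfrak q_1)$ and is unobjectionable.

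The gap sits exactly where you flag ``the main obstacle,'' and it is not merely technical: you never establish that $T_S\to S$ is a torsor, i.e.\ that it admits sections fppf- (or Zariski-) locally on $S$. Lemma~\ref{tran} gives surjectivity of $T_S\to S$ on geometric points, and freeness of the $G_1$-action is clear, but a surjective morphism all of whose fibres are isomorphic to $G_1$ need not be flat, and your proposed justification (flatness of $\mathcal I$ plus constancy of colength) yields local freeness of $(\cO_S\otimes_k\Aqn)/\mathcal I$, not flatness of $T_S\to S$. What is actually required is to produce, locally on $S$, an algebraically varying family of automorphisms carrying $\mathfrak q_1$ to $\mathcal I$ --- equivalently a local section of $T_S$ --- and this is precisely the nontrivial content the paper supplies: the ARR hypothesis gives a local framing $\{v, x_iv,\dots,x_i^{n-1}v\}$ of the quotient, in which $x_i$ acts as the standard regular nilpotent, whence $x_j\equiv\sum_l b_{jl}x_i^l$ with $b_{jl}$ regular functions on the open set, and these $b_{jl}$ are the coordinates of the desired section. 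Without this step (or an equivalent smoothness/infinitesimal-lifting argument for the orbit map $\GAqnk\to\GAqnk\cdot\mathfrak q_1$), there is nothing to descend along. A secondary defect: your uniqueness argument via geometric points and Proposition~\ref{P:equiv} fails for non-reduced $S$; in your framework uniqueness should instead come from the fact that two local lifts to $\GAqnk$ differ by a section of the scheme-theoretic transporter of $\mathfrak q_1$ to itself, which is $G_1$.
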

By using the universal property, we can readily identify $\Mqn$ with an open subscheme of a suitable Hilbert scheme. Our main theorem below will be proved in  Section~\ref{S:hilbert}.
\begin{theoremquote}  $\mathcal M_{q,n}$ is isomorphic to an open subscheme of the punctual Hilbert scheme $\Hilb^n_{[0]}\bP^{q}$.
\end{theoremquote}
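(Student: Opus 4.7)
The plan is to invoke the fine moduli characterization of $\Mqn$ from the preceding theorem, produce a natural morphism $\Phi\colon\Mqn\to\pHilb\bP^q$ via its universal property, and show that $\Phi$ is an open immersion by identifying its image as the regularity locus inside the punctual Hilbert scheme.

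First I would set up a canonical identification between colength-$n$ ideals of $\Aqn$ and length-$n$ subschemes of $\bP^q$ supported at the point $[0]:=(0:\cdots:0:1)$. Viewing $\Spec\Aqn$ as the $(n-1)$-st infinitesimal neighborhood of $[0]$ in the affine chart $\{x_0\neq 0\}\cong \bA^q\subset\bP^q$, any length-$n$ subscheme supported at $[0]$ is defined by an ideal $J\subset k[x_1,\ldots,x_q]$ with $\sqrt{J}=\mathfrak m:=\langle x_1,\ldots,x_q\rangle$. Since the Artinian local ring $k[x_1,\ldots,x_q]/J$ has length $n$, its maximal ideal is nilpotent of index at most $n$, forcing $\mathfrak m^n\subset J$; so $J$ descends to a colength-$n$ ideal of $\Aqn$. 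The family version of this statement identifies flat colength-$n$ ideal sheaves of $\cO_S\otimes \Aqn$ with flat families of length-$n$ subschemes of $\bP^q_S$ supported along the constant section at $[0]$. Applied to the universal ideal sheaf on $\Mqn$, this yields $\Phi$.

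Next I would identify the image of $\Phi$ and check openness in one step. A colength-$n$ ideal $I\subset\Aqn$ lies in the image precisely when the multiplication representation $\Aqn\to\End_k(\Aqn/I)$ is regular, i.e.\ $x_i^{n-1}\notin I$ for some $i$. On the universal family $\mathcal F:=(\cO_H\otimes\Aqn)/\mathcal I_{\mathrm{univ}}$ over $H:=\pHilb\bP^q$, multiplication by $x_i^{n-1}$ is an $\cO_H$-linear endomorphism of a locally free sheaf, and the locus where it vanishes identically is closed in $H$. The complement $U\subset H$ of the intersection of these loci over $i$ is therefore open and coincides set-theoretically with $\Phi(\Mqn)$. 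To promote $\Phi$ to an open immersion onto $U$, I would use the universal property of $\Mqn$ in the reverse direction: the restriction of the universal family on $H$ to $U$ is ARR and flat by construction, so it induces a morphism $U\to\Mqn$, and a direct functorial check shows it is inverse to $\Phi|_U$.

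The main obstacle I anticipate is the family version of the Artinian-length step, namely verifying that any flat family of length-$n$ subschemes of $\bP^q_S$ supported at $[0]_S$ is cut out, Zariski-locally on $S$, by an ideal containing $\mathfrak m^n\otimes\cO_S$, and so descends to a flat colength-$n$ ideal sheaf of $\cO_S\otimes\Aqn$. This requires upgrading the fiberwise nilpotency of the maximal ideal to a global statement, which is standard (via Nakayama together with the local freeness of the universal quotient sheaf) but demands care. Once this identification is in place, openness of the regularity locus and the construction of the inverse morphism via the universal property are essentially formal.
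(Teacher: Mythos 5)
Your proposal is correct and follows essentially the same route as the paper: construct the map to $\Hilb^n\bP^q$ from the universal ideal sheaf, observe it factors through $\pHilb\bP^q$ because the quotient is supported at $[0]$, cut out the open locus $U$ where some $x_i^{n-1}$ acts nontrivially on the universal quotient, and use the universal (ARR) property of $\Mqn$ to produce the inverse morphism $U\to\Mqn$. Your extra care with the family version of the containment $\mathfrak m^n\subset J$ is a point the paper treats only implicitly, but it does not change the argument.
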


Finally, we use a theorem by Iarrobino \cite{Iarrobino} and show that
\begin{theoremquote} $\Mqn \to \bP^{q-1}$ is not a vector bundle in the algebraic category. 
\end{theoremquote}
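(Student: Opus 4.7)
The plan is to argue by contradiction: suppose $\pi : \Mqn \to \bP^{q-1}$ carries an algebraic vector bundle structure of rank $r = (q-1)(n-2)$. By Proposition~\ref{fsp}, the total space is $\GL_q \times^{P_1} \mathcal{I}_{q,n}/(\mathcal{I}_{q,n}\cap G_1)$, so the transition functions on fibers are encoded by the conjugation action of $P_1$ on the unipotent group $\mathcal{I}_{q,n}/(\mathcal{I}_{q,n}\cap G_1)$. A vector bundle structure is therefore equivalent to a $P_1$-equivariant algebraic isomorphism between this unipotent group -- viewed as $\mathbb{A}^r$ via the coordinates of Corollary~\ref{C:G-mod} -- and a linear $P_1$-representation.

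In those coordinates, the $P_1$-conjugation is a polynomial action of degree strictly greater than one, and the essential higher-order terms come from the non-abelian structure of $\mathcal{I}_{q,n}$; they furnish an obstruction to linearizing the action. I would translate this obstruction into a geometric invariant of the embedded subscheme via the identification $\Mqn \hookrightarrow \pHilb \bP^q$ of Section~\ref{S:hilbert}: the $\GL_q$-orbit of $\mathfrak q_1 = \langle x_2,\dots,x_q\rangle$ is a canonical closed $\bP^{q-1}$ inside $\pHilb \bP^q$, and the obstruction is precisely the failure of its higher infinitesimal neighborhoods in the Hilbert scheme to split linearly -- equivalently, the failure of $\pi$ to reduce its structure group from the polynomial automorphism group of $\mathbb{A}^r$ down to $\GL_r$.

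Iarrobino's theorem from \cite{Iarrobino} provides an explicit algebraic description of these infinitesimal neighborhoods through $\Hom_{\Aqn}(I,\Aqn/I)$ and its higher symmetric analogues along $\bP^{q-1}$, and from it one reads off that the obstruction is nonzero. Since along a section of an algebraic vector bundle all higher infinitesimal neighborhoods are linearly split, this non-vanishing is incompatible with the vector bundle hypothesis, and $\Mqn \to \bP^{q-1}$ cannot be a vector bundle in the algebraic category. The main obstacle is matching the group-cohomology linearization obstruction for the $P_1$-action to the numerical invariant Iarrobino actually computes on the Hilbert scheme; once this dictionary is in place, the contradiction is immediate. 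Notably, topological invariants such as Chern classes cannot detect the obstruction -- they agree by the smooth diffeomorphism with $\bigoplus_{j=2}^{n-1} T_{\bP^{q-1}}(+j)$ from Section~\ref{S:topology} -- so Iarrobino's strictly algebraic input is indispensable.
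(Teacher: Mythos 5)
Your argument has a genuine gap, and it sits exactly where the paper places an explicit warning. You assert that an algebraic vector bundle structure on $\Mqn = \GL_q\times^{P_1}\mathcal I_{q,n}/(\mathcal I_{q,n}\cap G_1)$ is \emph{equivalent} to a $P_1$-equivariant linearization of the conjugation action on the fibre. That equivalence holds only for $\GL_q$-\emph{equivariant} vector bundle structures; an arbitrary vector bundle structure on the total space need not be compatible with the homogeneous presentation, so the transition functions could become linear in some other trivialization even though the $P_1$-action on $\mathcal I_{q,n}/(\mathcal I_{q,n}\cap G_1)$ is not linear. The opening paragraph of the paper's final section makes precisely this point: non-linearity of the $P_1$-action ``does not imply that $\Mqn\to\bP^{q-1}$ is not a vector bundle.'' Consequently the ``obstruction to linearizing the $P_1$-action'' is not, by itself, an obstruction to being a vector bundle, and the remainder of your sketch --- which never defines this obstruction, never shows it is an invariant of the bundle rather than of the chosen equivariant presentation, and never computes that it is nonzero --- does not close the argument. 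Your own admission that the ``main obstacle is matching the group-cohomology linearization obstruction \dots to the numerical invariant Iarrobino actually computes'' concedes the entire content of the theorem rather than a technical detail. Moreover, the cited result of Iarrobino is not a computation of infinitesimal neighborhoods via $\Hom_{A_{q,n}}(I,A_{q,n}/I)$; as quoted in the paper it is the statement that the space of linear colength-$4$ ideals in $k[[x,y]]$ is an affine bundle over $\bP^1$ that is not a vector bundle, the key input being that $\mathcal M_{2,4}\to\mathcal M_{2,3}$ admits no section.

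The paper's route is quite different and bypasses the linearization question entirely. Using the universal property it constructs an algebraic map $\mathcal M_{2,n}\to\Mqn$, $(N_1,N_2)\mapsto(N_1,N_2,\dots,N_2)$, realizing $\mathcal M_{2,n}$ as the restriction of $\Mqn$ over a line $\bP^1\subset\bP^{q-1}$; a vector bundle structure on $\Mqn$ would therefore restrict to one on $\mathcal M_{2,n}\to\bP^1$. By Grothendieck's splitting theorem together with the smooth classification (Theorem~\ref{T:main}), this would force $\mathcal M_{2,n}\simeq\oplus_j\cO_{\bP^1}(j+2)$, so the projection $\pi_{n,3}:\mathcal M_{2,n}\to\mathcal M_{2,3}$ would admit a section, and composing with $\pi_{n,4}$ would yield a section of $\pi_{4,3}:\mathcal M_{2,4}\to\mathcal M_{2,3}$ --- contradicting Iarrobino's Lemma~3. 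If you wish to salvage your approach you would need either to prove that every algebraic vector bundle structure on $\Mqn$ can be made $\GL_q$-equivariant (so that the linearization obstruction really is an invariant of the bundle), or to follow the paper in reducing to a concrete non-existence statement such as the absence of a section of $\pi_{4,3}$.
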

This is quite interesting on its own: Examples of affine space fibre bundles that are not vector bundles are quite rare. 

 \begin{ack} Hanspeter Kraft had reviewed an early version of this article and gave us many helpful suggestions
which led us to
 Proposition~4.2 and Proposition~6.2.  He deserves to be a co-author but graciously declined to be listed, and we would like to sincerely thank him for his contributions. 

We are also grateful to Michel Brion and Young-Hoon Kiem helpful discussion and suggestions. 

 D.H was partially supported by the Research Resettlement Fund for the new faculty of Seoul
National University, the  SNU Invitation Program for Distinguished Scholar, and
 the following grants funded by the government of Korea:
NRF grant 2011-0030044 (SRC-GAIA) and NRF-2013R1A1A2010649.
\end{ack}

\section{The Artinian algebra $\Aqn$ and its representations} \label{S:Aqn}

To a $q$-tuple $(N_1, \dots, N_q)$ of commuting nilpotents in $\End(V)$, one can associate a $k$-algebra homomorphism
\[
\begin{array}{cccc}
\phi: & k[x_1, \dots, x_q] & \to & \End(V) \\
& x_i & \to & N_i.
\end{array}
\]
Since $N_i$'s are nilpotent, $\ker(\phi)$ contains the ideal $J_{q,n}$ generated by all forms of degree $n$ in the variables $x_1, \dots ,x_q.$ Of course, $J_{q,n} = \mathfrak m_0^n$ where  $\mathfrak m_0$ is the maximal ideal generated by $x_1,\dots, x_q$.


\begin{definition} Let $\Aqn =k[x_1,\dots ,x_q]/J_{q,n}$ and $\mathfrak{m} = \mathfrak{m}_0/J_{q,n}.$  We shall call $\Aqn$ {\it the ring of $n$-nil polynomials}.
\end{definition}
Clearly $\Aqn$ is an Artinian $k$-algebra of $k$-dimension $\binom{n+q-1}{n-1}.$

\

A representation of $\Aqn$ will mean a $k$-algebra homomorphism $\rho: \Aqn \to \End(V)$.
Through a representation $\rho$, $V$ is endowed with an $\Aqn$-module structure. We denote it by $V_\rho$.  The correspondence between $q$-tuples of commuting $n$-nilpotents in $\End(V)$ and representations of $\Aqn$ is bijective. For this reason, we consider
\[
\Nqn := \left\{  \left. (N_1,\dots, N_q) \in \prod^q\bA^{n^2} \, \right|  \,  N_i^n = 0, [N_i, N_j] = 0, \forall i, j   \right\}
\]
as the variety of representations of $\Aqn$ in $V$, regarded as a subvariety of the $q$-fold product of the affine $n^2$-space with underlying vector space $\End(V)$.

\begin{definition} \label{D:regular_rep} A representation $\rho$ is called {\it regular} if $\rho(u^{n-1}) \ne 0$ for some $u\in \mathfrak m$. It is said to be {\it cyclic} if there is a vector $v\in V$ such that $\rho(\Aqn).v=V.$
\end{definition}

\begin{definition} \label{D:regular}A $q$-tuple $(N_1, \dots, N_q)$ of commuting nilpotents in $\End(V)$ is said to be {\it regular} (resp. {\it cyclic}) if the corresponding representation $\rho : A_{q,n} \to \End(V)$ determined by $\rho(x_i) = N_i$ is regular (resp. cyclic).
\end{definition}

Evidently a regular representation is cyclic. Also, the regular (resp. cyclic) representations form a Zariski open subset $\mathcal N^{r}_{q,n}$ (resp. $\mathcal N^c_{q,n}$) of $\Nqn$ since it is the complement of the vanishing locus of a collection of suitable minors. We write $\Mqn$ (resp. $\mathcal M^c_{q,n}$) for the set of equivalence classes of points of $\mathcal N^r_{q,n}$ (resp. $\mathcal N^c_{q,n}$) under simultaneous conjugation. Clearly $\Mqn$ is a proper subset of $\mathcal M^c_{q,n}$. Note that we are dropping the superscript $r$ when we pass from $\mathcal N^r_{q,n}$ to the quotient $\Mqn$ for notational convenience later on.

\

Given a representation $\rho$, we write $\mathcal{A}(\rho)$ for the annihilator in $\Aqn$ of $V_{\rho}$.
\begin{proposition}\label{P:equiv}  Let $\rho$ and $\rho'$ be two cyclic representations of $\Aqn$. Then $V_{\rho}$ and $V_{\rho'}$ are isomorphic as $\Aqn$-modules if and only if $\mathcal{A}(\rho)= \mathcal{A}(\rho').$ The isomorphism classes of cyclic representations of $\Aqn$ are in bijective correspondence with ideals in $\Aqn$ of codimension $n$ and with the simultaneous conjugacy classes of ordered $q$-tuples of commuting nilpotents.
\end{proposition}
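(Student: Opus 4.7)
The plan is to break the claim into three pieces: first, the equivalence ``$V_\rho \cong V_{\rho'}$ iff $\mathcal{A}(\rho)=\mathcal{A}(\rho')$''; second, the bijection between isomorphism classes of cyclic representations and ideals of codimension $n$; third, the bijection with simultaneous conjugacy classes of commuting $q$-tuples. Each piece is essentially formal, so I would treat them in that order and keep the argument short.

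For the first equivalence, one direction is trivial: an $A_{q,n}$-module isomorphism preserves annihilators. For the converse, I would exploit cyclicity directly. Choose generators $v \in V_\rho$ and $v' \in V_{\rho'}$. The $A_{q,n}$-linear surjections $\rho(\cdot)v\colon A_{q,n}\twoheadrightarrow V_\rho$ and $\rho'(\cdot)v'\colon A_{q,n}\twoheadrightarrow V_{\rho'}$ have kernels equal to the annihilators of $v$ and $v'$ respectively. The key observation, which I would state as a one-line sublemma, is that for a cyclic module the annihilator of any generator coincides with the annihilator of the whole module: if $av=0$ then $a\cdot(bv)=b(av)=0$ for every $b\in A_{q,n}$. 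Hence both kernels equal the common ideal $I:=\mathcal{A}(\rho)=\mathcal{A}(\rho')$, yielding $V_\rho\cong A_{q,n}/I\cong V_{\rho'}$.

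For the second bijection, sending $\rho$ to $\mathcal{A}(\rho)$ lands in the set of ideals of codimension $n$, because $V_\rho\cong A_{q,n}/\mathcal{A}(\rho)$ has $k$-dimension $n$. Surjectivity is immediate: for any such ideal $I$, the quotient $A_{q,n}/I$ is a cyclic $A_{q,n}$-module of dimension $n$, so choosing any $k$-linear identification with $V$ produces a cyclic representation with annihilator $I$. Injectivity is precisely the first part.

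For the third bijection, fixing $V=k^n$, a representation is the same datum as a $q$-tuple $(N_1,\dots,N_q)$ of commuting nilpotent endomorphisms via $N_i=\rho(x_i)$. Two representations $\rho,\rho'$ define isomorphic $A_{q,n}$-modules exactly when there exists $g\in\GL(V)$ intertwining them, that is, $gN_ig^{-1}=N_i'$ for every $i$; this is the definition of simultaneous conjugacy. Composing with the bijection already established completes the proof. No step presents a genuine obstacle; the only subtlety is the sublemma identifying the annihilator of a cyclic module with the annihilator of a generator, and I would make sure to isolate it explicitly.
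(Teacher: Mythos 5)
Your proposal is correct and follows essentially the same route as the paper: the trivial direction, the identification $V_\rho\cong A_{q,n}/\mathcal{A}(\rho)$ via a cyclic generator (using that the annihilator of a generator equals the annihilator of the module), and the construction of a cyclic representation from an arbitrary codimension-$n$ ideal by transporting the module structure on $A_{q,n}/I$ through a $k$-linear isomorphism with $V$. Your explicit isolation of the sublemma and of the translation to simultaneous conjugacy merely spells out steps the paper leaves implicit.
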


\begin{proof} If $\rho$ is cyclic with cyclic generator $v$ then the annihilator of $v$ is equal to the annihilator of $V_{\rho}$ and the map $a\mapsto av$ induces an isomorphism between $\Aqn/\mathcal{A}(\rho)$ and $V_{\rho}.$ It is trivial that if two representations are isomorphic then their annihilators are equal. All that must be shown is that if $I$ is an ideal in $\Aqn$ of codimension $n$ then there is a cyclic representation $(\rho, V)$ with annihilator $I=\mathcal{A}(\rho).$

If $I$ is of codimension $n$ then $\Aqn/I$ is a vector space of dimension $n$ and so there is an isomorphism (of $k$-vector spaces) $\theta:\Aqn/I \to V.$ Define $\rho$ by the equation $\rho(a)v=\theta(a\theta^{-1}(v)).$ This gives a representation of $\Aqn$ with cyclic vector $\theta(1).$
\end{proof}

\begin{lemma} Let $(Z_1,\dots ,Z_q)$ be a regular $q$-tuple i.e.  $Z_i^{n-1}\neq 0$ for some $i$. Then
\begin{enumerate}
\item any $Z_j$ is a polynomial in $Z_i$, and;
 \item if $(\rho, V)$ is the corresponding  representation, then  there is a $k$-algebra isomorphism $\rho(\Aqn)\simeq k[z]/z^nk[z].$
 \end{enumerate}
\end{lemma}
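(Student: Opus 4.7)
The plan is to exploit the fact that $Z_i^{n-1}\neq 0$ together with $\dim V=n$ forces $Z_i$ to be a \emph{regular} nilpotent endomorphism of $V$, so that $V$ is cyclic as a $k[Z_i]$-module. The heart of the argument is the classical fact that the centralizer of a regular nilpotent in $\End(V)$ coincides with the polynomial algebra it generates; both conclusions (1) and (2) then fall out.

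First, since every $Z_j$ is $n$-nilpotent ($Z_j^n=0$ because $(N_1,\dots,N_q)$ represents $\Aqn$), the minimal polynomial of $Z_i$ divides $z^n$; combined with $Z_i^{n-1}\ne 0$, the minimal polynomial is exactly $z^n$. Thus $1,Z_i,Z_i^2,\dots,Z_i^{n-1}$ are linearly independent in $\End(V)$, and the subalgebra $k[Z_i]\subseteq\End(V)$ is canonically isomorphic to $k[z]/z^nk[z]$, in particular of dimension $n$ over $k$. Pick any $v\in V$ with $Z_i^{n-1}v\ne 0$; a standard argument shows $v, Z_iv,\dots,Z_i^{n-1}v$ are linearly independent, hence form a $k$-basis of $V$, so $V$ is cyclic as a $k[Z_i]$-module with generator $v$.

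Now for part (1): any endomorphism $T\in\End(V)$ commuting with $Z_i$ is determined by its value on the cyclic generator $v$, because $T(Z_i^\ell v)=Z_i^\ell T(v)$. Writing $T(v)=p(Z_i)v$ for some polynomial $p$ of degree $<n$ (possible since $v$ is a cyclic generator), we conclude $T=p(Z_i)\in k[Z_i]$. Thus the centralizer of $Z_i$ in $\End(V)$ equals $k[Z_i]$. Since $[Z_i,Z_j]=0$ by hypothesis, each $Z_j$ lies in $k[Z_i]$, i.e.\ is a polynomial in $Z_i$.

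For part (2): $\rho(\Aqn)$ is the $k$-subalgebra of $\End(V)$ generated by $Z_1,\dots,Z_q$. Part (1) gives $\rho(\Aqn)\subseteq k[Z_i]$, and the reverse inclusion is obvious, so $\rho(\Aqn)=k[Z_i]\cong k[z]/z^nk[z]$ by the first step. The only step requiring any care is verifying that the centralizer of $Z_i$ equals $k[Z_i]$; everything else is essentially bookkeeping. I do not anticipate any genuine obstacle.
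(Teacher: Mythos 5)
Your proof is correct and follows essentially the same route as the paper: produce a cyclic vector $v$ with basis $v, Z_iv,\dots,Z_i^{n-1}v$, invoke the fact that the centralizer of a regular nilpotent is the polynomial algebra it generates, and deduce both claims. The only cosmetic differences are that you prove the centralizer fact abstractly (via $T(Z_i^\ell v)=Z_i^\ell T(v)$) where the paper phrases it as an elementary matrix computation with the subdiagonal matrix, and you obtain (2) directly from $\rho(\Aqn)=k[Z_i]$ rather than by exhibiting generators $x_j-f_j(x_1)$ of $\ker\rho$ (which the paper records because it is reused in Lemma~\ref{tran}).
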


\begin{proof} Suppose that $Z_1$ is of rank $n-1$, and let $v$ be a vector not annihilated by $Z_1^{n-1}$. Then $\{v, Z_1v, Z_1^2v, \dots, Z_1^{n-1}v\}$ form a basis of $V$: if $\sum_{i=0}^{n-1}a_iZ^iv = 0$ with $a_0 = a_1 = \dots = a_{m-1}= 0$ and $a_m \ne 0$, then we would have
\[
Z^{n-m-1}(a_mZ^m v) = - Z^{n-m-1}(a_{m+1}Z^{m+1}v + \dots a_{n-1}Z^{n-1}v) = 0
\]
which is contradictory to our choice of $v$. With respect to this ordered cyclic basis, $Z_1$ is represented by the $n\times n$ matrix $X$ with $1$ on the sub diagonal and zero elsewhere. An elementary computation shows that any $n\times n$ matrix commuting with $X$ is a polynomial in $X$. The first item follows, and we let $f_i$ denote the polynomial without a constant term such that $f_i(Z_1) = Z_i$. Then the kernel of $\rho$ is generated by $x_i - f_i(x_1)$, so $\rho(A_{q,n})$ is isomorphic to $A_{q,n}/\langle x_2-f_2(x_1), \dots, x_q-f_q(x_1)\rangle \simeq k[x_1]/\langle x_1^n \rangle$.
\end{proof}

\begin{definition} We let $\GAqnk$ denote the algebraic group of $k$-algebra automorphisms of $A_{q,n}$.
\end{definition}
Note that $\GAqnk$ is a linear algebraic group since it is a closed subgroup of $\GL(A_{q,n})$ where $A_{q,n}$ is viewed as a finite dimensional $k$-vector space. 

\begin{definition} A subset $\{ u_1, \dots ,u_r \} \subset \Aqn$ will be called a {\it system of nil parameters} of $\Aqn$ of length $r$ if the $\Aqn/\langle u_1,\dots,u_r\rangle$ is isomorphic  to $A_{q-r,n}.$  The variables $x_1, \dots, x_q$ are called the {\it standard nil parameters}.
\end{definition}

\begin{remark} Throughout this paper,  we shall let \textit{ $\mathfrak q_1$ denote the ideal of $A_{q,n}$ generated by $x_2, \dots, x_q$.}
\end{remark}

\begin{lemma}\label{tran}Let $\rho$ be a regular representation. Then there is a system of nil parameters, $u_1, \dots, u_q$ so that $A(\rho)$ is the ideal generated by $u_2, \dots, u_q.$ Moreover there is an automorphism $\alpha \in \GAqnk$ such that $\alpha(x_i)=u_i$ and $\alpha(\mathfrak{q}_1)=A(\rho).$

\end{lemma}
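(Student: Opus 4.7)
The plan is to use the preceding lemma to produce the $u_j$'s as explicit ``polynomial corrections'' of the standard generators, and then to lift the substitution $x_j \mapsto u_j$ to an automorphism. As a first reduction, after pre-composing with the permutation automorphism of $\Aqn$ swapping $x_1$ with the index $x_{i_0}$ for which $Z_{i_0}^{n-1} \neq 0$ (this permutation clearly lies in $\GAqnk$), we may assume without loss of generality that $Z_1^{n-1} \neq 0$; the general case then follows by transporting the $u_j$'s and $\a$ obtained below through the same permutation.

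Under this normalization, part (1) of the preceding lemma furnishes polynomials $f_2, \dots, f_q \in k[t]$ with no constant term, such that $Z_j = f_j(Z_1)$ for $j = 2, \dots, q$. I would set $u_1 := x_1$ and $u_j := x_j - f_j(x_1) \in \Aqn$ for $j \geq 2$. By construction, $\rho(u_j) = 0$ for $j \geq 2$, so $\langle u_2, \dots, u_q\rangle \subseteq A(\rho)$. Conversely, $\Aqn/\langle u_2, \dots, u_q\rangle$ is generated as a $k$-algebra by the class of $x_1$ and hence has $k$-dimension at most $n$; but $\Aqn/A(\rho) \simeq k[z]/(z^n)$ by part (2) of the preceding lemma has dimension exactly $n$, so the two ideals must coincide. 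The system $\{u_1, \dots, u_q\}$ is a system of nil parameters of length $q$ because the ideal it generates contains every $x_j$ (since $f_j(x_1) \in \mathfrak{m}$), hence equals $\mathfrak{m}$, giving $\Aqn/\langle u_1, \dots, u_q\rangle = k = A_{0,n}$.

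To produce $\a$, I would define a $k$-algebra endomorphism of $\Aqn$ by $\a(x_j) := u_j$. This is well-defined because each $u_j$ lies in the nilpotent ideal $\mathfrak{m}$, so any monomial of degree $n$ in the $u_j$'s vanishes in $\Aqn$, and the defining relation $\mathfrak{m}_0^n = 0$ is respected. Invertibility is witnessed by the explicit map $\beta$ sending $x_1 \mapsto x_1$ and $x_j \mapsto x_j + f_j(x_1)$ for $j \geq 2$; a direct substitution on generators yields $\a \circ \beta = \beta \circ \a = \id$. Thus $\a \in \GAqnk$, and by construction $\a(\mathfrak{q}_1) = \langle u_2, \dots, u_q\rangle = A(\rho)$, as required.

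The argument is essentially routine given the preceding lemma. The main point to check carefully is that the substitution $x_j \mapsto u_j$ actually descends from $k[x_1, \dots, x_q]$ to $\Aqn$, which relies on $\mathfrak{m}^n = 0$; and that $\a$ is invertible, which one verifies by exhibiting the explicit inverse rather than appealing to any dimension count. The reduction to the case $i_0 = 1$ is purely bookkeeping via the coordinate permutation in $\GAqnk$.
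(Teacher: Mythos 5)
Your proof follows essentially the same route as the paper's: invoke the preceding lemma to write $Z_j=f_j(Z_1)$, set $u_j=x_j-f_j(x_1)$, identify $\langle u_2,\dots,u_q\rangle$ with $A(\rho)$ by the containment-plus-dimension count, and extend $x_i\mapsto u_i$ to an automorphism carrying $\mathfrak q_1$ to $A(\rho)$. Your explicit inverse $\beta$ is a clean way to certify invertibility, where the paper instead observes that the $u_i$ form a basis of $\mathfrak m/\mathfrak m^2$ and hence a system of nil parameters.

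One point to tighten: by Definition~\ref{D:regular_rep}, a regular representation only guarantees $\rho(u)^{n-1}\neq 0$ for some $u\in\mathfrak m$, not for some coordinate $x_{i_0}$, so your opening reduction via a coordinate permutation does not literally cover all cases (the paper's proof works with an arbitrary such $u_1\in\mathfrak m$ and chooses the $v_j$ as lifts of a basis of the kernel of the induced cotangent map). The fix is cheap: since $\mathfrak m^n=0$, $\rho(u)^{n-1}=\rho(\ell)^{n-1}$ where $\ell$ is the linear part of $u$, so you may precompose with the linear automorphism in $\GL_q(k)\subset\GAqnk$ sending $x_1$ to $\ell$ instead of a permutation. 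Alternatively, note that the two notions of regularity coincide: writing each $N_i=f_i(\rho(\ell))$ and $\rho(\ell)=\sum_i a_if_i(\rho(\ell))$ forces some $f_i$ to have nonzero linear coefficient, whence $N_i^{n-1}=(\text{lin.\ coeff.})^{n-1}\rho(\ell)^{n-1}\neq 0$. With either adjustment the argument is complete.
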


\begin{proof} Since $\rho$ is regular,  $\rho(u_1)^{n-1}\neq 0$ for some $u_1 \in \mathfrak m$. We have seen that this implies that the image of $\rho$ is of the form $k[z]/z^nk[z]$ where $\rho(u_1)$ is  the class of $z.$ Let $\mathfrak{q}=\operatorname{ker}(\rho).$ Now $\rho$ induces the map $\varphi: \mathfrak m/\mathfrak m^2 \to (\bar z)/(\bar z^2)$ of cotangent spaces. Choose $v_2,\dots ,v_q \in \mathfrak m$  whose images in $\mathfrak m/\mathfrak m^2$ form a basis for the kernel of $\varphi$. Then there is a polynomial $f_j(z)\in z^2k[z]$ such that $\rho(v_j)\equiv f_j(z)$ modulo $z^nk[z].$ Hence $v_j-f_j(u_1)\in \mathfrak{q}$ and it is congruent to $v_j$ modulo $\mathfrak{m}^2.$ Let $u_j=v_j-f_j(u_1)$, $j=2,\dots, q.$ The elements $u_1, u_2, \dots ,u_q$ are a basis for $\mathfrak{m}/\mathfrak{m}^2$, so they generate $\Aqn$ as a $k$-algebra. That is, they are a set of nil parameters for $\Aqn.$ The elements $u_2,\dots ,u_q$ generate an ideal $\mathfrak{q}'\subseteq \mathfrak{q}$ and $\Aqn/\mathfrak{q}'\simeq k[z]/z^nk[z].$ Hence by dimension count $\mathfrak{q}=\mathfrak{q}'.$ It immediately follows that we may define an automorphism $\alpha \in \GAqnk$ such that $\alpha(x_i)=u_i$ and $\alpha(\mathfrak{q}_1)=\mathfrak{q}=A(\rho).$
\end{proof}

\section{Preliminary results on the algebraic groups of automorphisms of $A_{q,n}$}\label{S:algebraic_groups}

We let $\Omega$ denote the $k$-vector subspace of $\Aqn$ generated by $x_1, \dots, x_q$. By abusing notation, we shall identify $\Omega$ with the cotangent space $\mathfrak{m}/\mathfrak{m}^2$.
Let $\sigma$ be an automorphism of $\Aqn.$ Then $\sigma(\mathfrak{m})=\mathfrak{m}$ and so $\sigma$ induces a linear automorphism of $\Omega$.  The map which assigns to $\sigma$ the associated automorphism of $\Omega \simeq \coprod_{i=1}^qk\bar x_i$ is clearly a group morphism from the group $\GAqnk$ of automorphisms of $\Aqn$ to $\GL(\Omega).$

\begin{definition} Let $\pi : \GAqnk \to \GL(\Omega)$ denote the morphism which sends the automorphism $\sigma$ of $\Aqn$ to the associated linear automorphism in $\GL(\Omega)$. Also, we identify $\GL(\Omega)$ with $\GL_q(k)$ by using the basis $\bar x_1, \dots, \bar x_q$.
\end{definition}

Clearly $\pi$ is surjective: Let  $\Omega_1$ be the subspace of $\Omega$ spanned by $x_2, \dots, x_q.$  Any $\alpha = (\alpha_{ij})\in \GL_q(k)$ naturally defines an element $\tilde\alpha \in \GAqnk$ defined by $\tilde{\alpha}(x_i) = \sum_j \alpha_{ij}x_j$. This defines a section $\chi : \GL_q(k) \to \GAqnk$ and we frequently identify $\GL_q(k)$ with its image in $\GAqnk$ as the group of linear automorphisms of $\Aqn$.
An automorphism $\sigma \in \GAqnk$ will be called {\it linearly trivial} if it is in  the kernel of $\pi.$ If $\sigma$ is linearly trivial, there are quadratic expressions $s_i(x_1,\dots ,x_q)\in \mathfrak{m}^2,\; i=1,\dots ,q$ such that $\sigma(x_i)=x_i-s_i.$ We let $\mathcal{I}_{q,n}$ denote the kernel of $\pi$:

\begin{equation}\label{fues}
1\rightarrow \mathcal{I}_{q,n}\rightarrow \GAqnk\rightarrow \GL_q(k)\rightarrow 1.
\end{equation}

We also wish to describe the stabilizer $G_1$ of the ideal $\mathfrak{q}_1$.
Note that $\pi$ carries $G_1$ to the stabilizer $P_1$ in $\GL_q(k)$ of the codimension one space $\Omega_1$ in $\Omega=\mathfrak{m}/\mathfrak{m}^2$. Then the exact sequence (\ref{fues}) restricts to the exact sequence:

\begin{equation}\label{stes}
1\rightarrow \mathcal{I}_{q,n}\cap G_1 \rightarrow G_1\rightarrow P_1\rightarrow 1.
\end{equation}

The section $\chi$ carries $P_1$ to the stabilizer of $\Omega_1$ in $\GL_q(k).$ In consequence $\GAqnk$ and $G_1$ are compatibly semidirect products
$\GL_q(k)\cdot \mathcal{I}_{q,n}$ and $P_1\cdot(\mathcal{I}_{q,n}\cap G_1)$ respectively so that the action of an element of $\GL_q(k)$ on $\Aqn$ is determined by its linear action on $\Omega.$

\

We first offer some preliminary results on the algebraic groups involved.

\begin{proposition}\label{vstr}The exact sequences (\ref{fues}) and (\ref{stes}) with the section $\chi$ induce isomorphisms of varieties:
\begin{align*}
\GAqnk&\simeq \GL_q(k)\times \mathcal{I}_{q,n} \\
 G_1\simeq &P_1\times (\mathcal{I}_{q,n}\cap G_1)
\end{align*}
Moreover:
\begin{enumerate}
  \item The groups $\mathcal{I}_{q,n}$ and $\mathcal{I}_{q,n}\cap G_1$  are unipotent. Let $\mathcal{I}^j_{q,n}$ denote the subgroup of $\mathcal{I}_{q,n}$ consisting of elements which reduce to the identity modulo $\mathfrak{m}^{j+2}.$ Each of these groups is normal in $\GAqnk$
  \item There is  an isomorphism of additive group schemes $\mathcal{I}^j_{q,n}/\mathcal{I}^{j+1}_{q,n}\simeq (\mathfrak{m}^{j+2}/\mathfrak{m}^{j+3})^q$.
  \item There are natural isomorphisms of varieties:
  \begin{align}
     \mathcal{I}_{q,n}\simeq &(\mathfrak{m}^2)^{(q)}\label{iso1} \\ \mathcal{I}_{q,n}\cap G_1 &\simeq \mathfrak{m}^2\times (\mathfrak{q}_1\mathfrak{m})^{(q-1)}\label{iso2}\end{align} The exponents in parentheses represent iterated Cartesian products.
     \item The groups $\GAqnk$ and $G_1$ are connected affine group schemes.
\end{enumerate}
\end{proposition}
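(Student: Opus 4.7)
The backbone of the proof is the splitting $\chi$ of the exact sequences \eqref{fues} and \eqref{stes}. Once $\chi$ is a group section defined over $k$, the map $(g,\iota)\mapsto \chi(g)\iota$ is a morphism of varieties $\GL_q(k)\times \mathcal{I}_{q,n}\to \GAqnk$ with inverse $\sigma\mapsto(\pi(\sigma),\chi(\pi(\sigma))^{-1}\sigma)$, both polynomial; restricting to preimages of $P_1$ gives the second isomorphism. So the main work is to describe $\mathcal{I}_{q,n}$ and $\mathcal{I}_{q,n}\cap G_1$ explicitly.

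For (1), any $\sigma\in\mathcal{I}_{q,n}$ fixes $\mathfrak m$ and is the identity on $\mathfrak m/\mathfrak m^2$. Writing $\sigma(x_i)=x_i-s_i$ with $s_i\in\mathfrak m^2$ and expanding $\sigma$ on a monomial, one sees by induction that $(\sigma-\id)(\mathfrak m^j)\subseteq\mathfrak m^{j+1}$, whence $(\sigma-\id)^n=0$ on $A_{q,n}$ since $\mathfrak m^n=0$. Thus $\mathcal{I}_{q,n}\subset \GL(A_{q,n})$ is unipotent, and so is its closed subgroup $\mathcal{I}_{q,n}\cap G_1$. For normality of $\mathcal{I}^j_{q,n}$, observe that any $\tau\in\GAqnk$ preserves the $\mathfrak m$-adic filtration, so $\tau\sigma\tau^{-1}-\id$ still carries $\mathfrak m^\ell$ into $\mathfrak m^{\ell+j+1}$ and in particular acts as the identity modulo $\mathfrak m^{j+2}$.

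The heart of the proof is the parametrization in (2) and (3). I claim that the prescription $\sigma(x_i)=x_i+s_i$ with $s_i\in\mathfrak m^2$ defines an element of $\mathcal{I}_{q,n}$ for \emph{any} choice of $s_i$'s, giving a bijective variety morphism $(\mathfrak m^2)^{(q)}\to \mathcal{I}_{q,n}$. Well-definedness of the ring map is automatic: each $x_i+s_i$ lies in $\mathfrak m$, so every product of $n$ of them vanishes in $A_{q,n}$; invertibility is the unipotent argument of the previous paragraph (by Nakayama, $\sigma$ is surjective, hence an automorphism of the Artinian ring). For membership in $G_1$ we need $\sigma(\mathfrak q_1)=\mathfrak q_1$, equivalently $\sigma(x_j)\in\mathfrak q_1$ for $j\ge 2$; combined with $s_j\in\mathfrak m^2$, this amounts to $s_j\in\mathfrak m^2\cap\mathfrak q_1$. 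The key algebraic point—the only place where something is not formal—is the identity
\[
\mathfrak m^2\cap \mathfrak q_1 \;=\; \mathfrak q_1\mathfrak m ,
\]
which I would prove using the grading: every element of $\mathfrak q_1$ of degree $\ge 2$ can be written as $\sum_{j\ge 2}x_jp_j$ where each monomial in $p_j$ has positive degree, and this expression is in $\mathfrak q_1\mathfrak m$. This gives the isomorphism \eqref{iso2} and simultaneously handles \eqref{iso1} with the case $\mathfrak q_1=0$. For (2) we refine the same map: for $\sigma\in\mathcal{I}^j_{q,n}$, the assignment $\sigma\mapsto(\bar s_1,\dots,\bar s_q)\in(\mathfrak m^{j+2}/\mathfrak m^{j+3})^q$ is easily checked to convert composition into addition (the correction terms lie in $\mathfrak m^{j+3}$), and its kernel is precisely $\mathcal{I}^{j+1}_{q,n}$; surjectivity follows from the unconstrained parametrization above applied to $s_i\in\mathfrak m^{j+2}$.

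Part (4) is then immediate: $\mathcal{I}_{q,n}\cong(\mathfrak m^2)^q$ and $\mathcal{I}_{q,n}\cap G_1\cong \mathfrak m^2\times(\mathfrak q_1\mathfrak m)^{q-1}$ are affine spaces hence connected, $\GL_q(k)$ and the parabolic $P_1$ are connected, and a product of connected varieties is connected, so the semidirect product decompositions in (i) give connectedness of $\GAqnk$ and $G_1$. I expect the only subtle verification to be the graded-intersection identity $\mathfrak m^2\cap\mathfrak q_1=\mathfrak q_1\mathfrak m$ (and the analogous stable-under-composition computation for (2)); everything else is a direct unwinding of the semidirect product structure supplied by $\chi$.
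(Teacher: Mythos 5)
Your proposal is correct and follows essentially the same route as the paper: the product decompositions come from the section $\chi$, the key step is the parametrization of $\mathcal{I}_{q,n}$ by arbitrary $q$-tuples in $(\mathfrak m^2)^{(q)}$ via the ``system of nil parameters'' argument, membership in $G_1$ reduces to $\mathfrak q_1\cap\mathfrak m^2=\mathfrak q_1\mathfrak m$ (which the paper proves by the same graded decomposition, though only in the proof of the dimension count that follows), and item (2) is the same additivity-modulo-$\mathfrak m^{j+3}$ computation. The only cosmetic differences are that you prove unipotence directly by showing $\sigma-\id$ is nilpotent on $A_{q,n}$ and normality by conjugation preserving the $\mathfrak m$-adic filtration, whereas the paper deduces both from identifying $\mathcal{I}^{j}_{q,n}$ as the kernel of $\GAqnk\to\GA_{q,j+2}$ and from the filtration with additive quotients; both are valid.
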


\begin{proof} The two initial product decompositions follow from the existence of the section $\chi.$ We define a map $\alpha$ from $\mathcal{I}_{q,n}$ to $(\mathfrak{m}^2)^{(q)}$ as follows:  For $\sigma \in \mathcal{I}_{q,n}$, $ \sigma(x_i)=x_i+u_i$ for some unique element $u_i\in \mathfrak{m}^2.$ Let $\alpha(\sigma)=(u_1,\dots ,u_q)$, the corresponding $q$-tuple of elements of $\mathfrak{m}^2.$ Conversely given any such $q$-tuple consider the elements $x'_i=x_i+u_i.$ Let $A=k[x'_1, \dots x'_q]$ the subalgebra of $\Aqn$ generated by these elements. The $x'_i$ generate $\mathfrak{m}/\mathfrak{m}^2$ and so the associated graded of $A$ and that of $\Aqn$ are the same. Hence the two algebras are of the same dimension and the $x'_i$ constitute a system of nil parameters. Thus there is an automorphism of $\Aqn$ sending $x_i$ to $x'_i$ and so $\alpha$ is surjective as well as injective. This establishes (\ref{iso1}).

The element $\sigma \in \mathcal{I}_{q,n}$ is in $\mathcal{I}_{q,n}\cap G_1$ if and only if  $\sigma(x_i)\in \mathfrak{q}_1$ for $i\geq 2.$ Now $\sigma(x_i)=x_i+u_i$ with $u_i\in \mathfrak{m}^2$ and so $\sigma \in \mathcal{I}_{q,n}\cap G_1$ if and only if $u_i\in \mathfrak{q}_1\cap \mathfrak{m}^2$ for each $i\geq 2.$ Said otherwise, $\sigma \in \mathcal{I}_{q,n}\cap G_1$ if and only if $\alpha(\sigma)\in \mathfrak{m}^2\times (\mathfrak{q}_1\mathfrak{m})^{(q-1)}.$ Thus (\ref{iso2}) is established.

 For the first item, if $\mathcal{I}_{q,n}$ is unipotent then its subgroup $\mathcal{I}_{q,n}\cap G_1$ is as well. To see that $\mathcal{I}_{q,n}$ is unipotent we examine the filtration of item 1. The surjection $\Aqn \rightarrow A_{q,r},\;r<n$ induces a surjective map $\GAqnk\rightarrow GA_{q,r}$ with kernel $\mathcal{I}_{q,n}^{r-2}$ whence each of the groups is normal in $\GAqnk.$ Hence if $\sigma\in \mathcal{I}_{q,n}^j$ it induces the identity on $\Aqn/\mathfrak{m}^{j+2}.$ Consequently $\sigma(x_i)=x_i+u_i(x_1,\dots,x_q),\; u_i\in \mathfrak{m}^{j+2}.$ Suppose that $\tau$ is another such automorphism and that $\tau(x_i)=x_i+v_i(x_1,\dots ,x_q)$ with $v_i\in \mathfrak{m}^{j+2}.$ Then $\sigma \circ \tau(x_i)=\sigma(x_i+v_i(x_1,\dots ,x_q))=x_i+u_i(x_1,\dots ,x_q)+v_i(x_1+u_1, \dots ,x_q+u_q).$ Now $v_i$ is a polynomial with lowest degree terms of degree at least $j+2$ and so we may write $v_i(x_1+u_i, \dots ,x_q+u_q)=v_i(x_1,\dots ,x_q)+\sum_{(\nu_1,\dots ,\nu_q)}c_{(\nu_1, \dots ,\nu_q)}u_1^{\nu_1}\dots u_q^{\nu_q}.$ In this last expression the $q$-tuples $(\nu_1, \dots ,\nu_q)$ have nonnegative integral entries with at least one positive entry and the coefficients $c_{(\nu_1, \dots ,\nu_q)}$ are polynomials in the $x_i$ with no constant terms. Consequently, since $u_i$ is of degree at least $j+2$ all terms except the first are in $\mathfrak{m}^{j+3}.$ In particular, $\sigma\circ \tau(x_i)\equiv x_i+ u_i(x_1,\dots ,x_q)+v_i(x_1,\dots ,x_q)$ \, $\text{mod} (\mathfrak{m}^{j+3}).$ For $\sigma \in \mathcal{I}_{q,n}^j$ let $\alpha_j(\sigma)=(\overline{\sigma(x_1)-x_1}, \dots , \overline{\sigma(x_q)-x_q})$ where the overline denotes class in $\mathfrak{m}^{j+2}/\mathfrak{m}^{j+3}.$ The preceding calculation shows that $\alpha_j$ is a homomorphism to the additive group scheme $(\mathfrak{m}^{j+2}/\mathfrak{m}^{j+3})^q.$ For any $q$-tuple $(u_1,\dots ,u_q)$ with all the $u_i\in \mathfrak{m}^{j+2}$ the elements $x_i+u_i$ are a system of nil parameters and so there is an automorphism $\sigma$ sending $x_i$ to $x_i+u_i$ for each $i.$ It follows that $\alpha_j$ is surjective. Hence $\mathcal{I}_{q,n}$ admits a finite filtration so that successive quotients are group schemes of additive type isomorphic to the vector spaces $\mathfrak{m}^{j+2}/\mathfrak{m}^{j+3}.$ Hence it is unipotent and connected.
\end{proof}

We compute the dimensions of the groups $\mathcal{I}_{q,n}$ and $G_1\cap \mathcal{I}_{q,n}$ and relevant homogeneous spaces.

\begin{proposition}\label{dcomp}
\begin{enumerate}
\item $\operatorname{dim}(\mathcal{I}_{q_n})=q(\binom{q+n-1}{n-1}-(q+1))$
\item $\operatorname{dim}(\mathcal{I}_{q,n}\cap G_1)= \binom{q+n-1}{n-1}+(q-1)(\binom{q+n-1}{n-1}-(q+1)-(n-2))$
\item $\operatorname{dim}(\GAqnk)=q\binom{q+n-1}{n-1}-q$
\item $\operatorname{dim}(G_1)=q\binom{q+n-1}{n-1}-qn+n-1$
\item $\operatorname{dim}(\GAqnk/G_1)=(q-1)(n-1)$
\item $\operatorname{dim}(\mathcal{I}_{q,n}/(\mathcal{I}_{q,n}\cap G_1))=(q-1)(n-2)$
\end{enumerate}

\end{proposition}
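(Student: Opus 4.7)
The plan is to reduce everything to counting monomials, using Proposition~\ref{vstr} as the main engine. All six numbers follow once we know the two fundamental dimensions
\[
d_1 := \dim_k \mathfrak{m}^2, \qquad d_2 := \dim_k (\mathfrak{q}_1 \mathfrak{m}),
\]
because the isomorphisms of varieties $\mathcal{I}_{q,n} \simeq (\mathfrak{m}^2)^{(q)}$ and $\mathcal{I}_{q,n} \cap G_1 \simeq \mathfrak{m}^2 \times (\mathfrak{q}_1 \mathfrak{m})^{(q-1)}$ provided by Proposition~\ref{vstr} immediately give
\[
\dim \mathcal{I}_{q,n} = q d_1, \qquad \dim(\mathcal{I}_{q,n}\cap G_1) = d_1 + (q-1) d_2.
\]

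First I would compute $d_1$ and $d_2$. Since $\mathfrak{m}$ is spanned by monomials of positive degree $< n$ in $x_1,\dots,x_q$ and $A_{q,n}$ has the known total dimension $\binom{n+q-1}{n-1}$, removing the constant $1$ and the degree-one monomials $x_1,\dots,x_q$ yields $d_1 = \binom{n+q-1}{n-1} - (q+1)$. For $d_2$, observe that $\mathfrak{q}_1 \mathfrak{m}$ is spanned by those degree-$\geq 2$ monomials that involve at least one of $x_2,\dots,x_q$; the monomials in $\mathfrak{m}^2 \setminus \mathfrak{q}_1 \mathfrak{m}$ are precisely the pure powers $x_1^2, x_1^3, \dots, x_1^{n-1}$, of which there are $n-2$. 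Hence $d_2 = d_1 - (n-2) = \binom{n+q-1}{n-1} - (q+1) - (n-2)$. Substituting gives items (1) and (2) directly.

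For items (3) and (4), I would use the semidirect product decompositions from Proposition~\ref{vstr}: $\dim \GAqnk = \dim \GL_q(k) + \dim \mathcal{I}_{q,n} = q^2 + q d_1$, which simplifies to $q\binom{n+q-1}{n-1} - q$ after the cancellation of $q^2$ against the $q(q+1)$ coming from $qd_1$. For $G_1$, the section $\chi$ identifies the linear part with the parabolic $P_1 \subset \GL_q(k)$ stabilizing $\Omega_1$; its dimension is $q^2 - (q-1)$, since the only constraints are the $q-1$ equations forcing the first row (in the sense of the paper's matrix convention) to have zeros outside the $(1,1)$ entry. Adding this to $d_1 + (q-1)d_2$ and expanding yields $q\binom{n+q-1}{n-1} - qn + n - 1$.

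Finally, (5) and (6) follow by simple subtraction: $\dim(\GAqnk/G_1) = (\dim \GAqnk) - (\dim G_1)$ telescopes to $qn - q - n + 1 = (q-1)(n-1)$, while $\dim(\mathcal{I}_{q,n}/(\mathcal{I}_{q,n} \cap G_1)) = (q-1)(d_1 - d_2) = (q-1)(n-2)$. There is no genuine obstacle here: the entire argument is bookkeeping, and the only point where one must be careful is in correctly identifying the codimension of $\mathfrak{q}_1 \mathfrak{m}$ inside $\mathfrak{m}^2$, which reduces to observing that the complementary basis consists solely of the pure powers of $x_1$.
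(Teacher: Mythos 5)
Your proposal is correct and follows essentially the same route as the paper: compute $\dim\mathfrak{m}^2$ and $\dim(\mathfrak{q}_1\mathfrak{m})$, feed them into the variety isomorphisms of Proposition~\ref{vstr}, and finish with the semidirect product decompositions and subtraction. The only cosmetic difference is that you obtain $\dim(\mathfrak{q}_1\mathfrak{m})=\dim\mathfrak{m}^2-(n-2)$ by counting the complementary monomials $x_1^2,\dots,x_1^{n-1}$ directly, whereas the paper reads off the same codimension from the exact sequence $0\to\mathfrak{q}_1\mathfrak{m}\to\mathfrak{m}^2\to(\bar z)^2k[\bar z]\to 0$.
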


\begin{proof}First note that $\Aqn$ is graded and isomorphic as a vector space to the sum of the first $n-1$ symmetric powers of the vector space of dimension $q.$ Hence its dimension is equal to the dimension of the forms of degree $n-1$ in $q+1$ variables, that is to say $\binom{q+n-1}{n-1}.$ Hence the dimension of $\mathfrak{m}$ is $\binom{q+n-1}{n-1}-1$ and the dimension of $\mathfrak{m}^2$ is $\binom{q+n-1}{n-1}-q-1.$

Now we wish to compute the dimensions of the ideals $\mathfrak{q}_1$ and $\mathfrak{q}_1\mathfrak{m}.$ Now $\Aqn/\mathfrak{q}_1\simeq k[\bar z]$ where $\bar z$ is the residue class of $z$ in $k[z]/z^nk[z].$ The isomorphism sends the residue class of $x_1$ to $\bar z.$ Hence $\operatorname{dim}(\mathfrak{q}_1)=\binom{q+n-1}{n-1}-n.$

Now note that $\mathfrak{q}_1\cap \mathfrak{m}^2 =\mathfrak{q}_1\mathfrak{m}.$ To see this note that an element of $\mathfrak{m}^2$ can be written in the form $x_1^2f(x_1)+u$ with $u\in \mathfrak{q}_1\mathfrak{m}.$ Thus modulo $\mathfrak{q}_1$ this just becomes $(\bar z)^2f(\bar z)$ and so it lies in $\mathfrak{q}_1$ if and only if $x_1^2f(x_1)=0.$ This means that $\mathfrak{q}_1\cap \mathfrak{m}^2 =\mathfrak{q}_1\mathfrak{m}.$ Hence we may compute the dimension of $\mathfrak{q}_1\mathfrak{m}$ from the exact sequence:
$$ 0\rightarrow \mathfrak{q}_1\mathfrak{m}\rightarrow \mathfrak{m}^2\rightarrow (\bar z)^2k[\bar z]\rightarrow 0 $$
In consequence $\operatorname{dim}(\mathfrak{q}_1\mathfrak{m})=\binom{q+n+1}{n-1}-(q+1)-(n-2)=\binom{q+n+1}{n-1}-q$  $-n+1.$

To compute the dimensions of the groups $\mathcal{I}_{q,n}$ and $\mathcal{I}_{q,n}\cap G_1$ we recall the isomorphisms (\ref{iso1}) and (\ref{iso2}). Hence as a variety, $\mathcal{I}_{q,n}$ is isomorphic to the $q$-fold Cartesian product $(\mathfrak{m}^2)^q.$ That is it is isomorphic to affine $k$-space of dimension $q(\binom{q+n-1}{n-1}-(q+1)).$

Applying (\ref{iso2}), $\mathcal{I}_{q,n}\cap G_1$ is isomorphic to the the product of the vector space $\mathfrak{m}^2$ and the $(q-1)$'th Cartesian power of the affine space $\mathfrak{q}_1\mathfrak{m}.$ We find that $\operatorname{dim}(\mathcal{I}_{q,n}\cap G_1)$ is
\[
\binom{q+n-1}{n-1}-(q+1)+(q-1)(\binom{q+n-1}{n-1}  -(q+1)-(n-2)).
\]
Since $\operatorname{dim}(\GL_q(k))=q^2$ and $\operatorname{dim}(P_1)=q^2-q+1$ we may use the exact sequences (\ref{fues}) and (\ref{stes}) to compute the dimensions of $\GAqnk$ and $G_1.$ The formulae in the theorem can now be obtained from some basic algebra computations combined with these results.  \end{proof}

\section{The quasihomogeneous structure on $\Mqn$} \label{S:quasihs}

Due to Lemma~\ref{tran}, we can identify the space $\Mqn$ with the orbit $\GAqnk.\mathfrak q_1$, giving it a homogeneous space structure. From now on,

\begin{definition}\label{D:Mqn} $\Mqn$ means the homogeneous space $\GAqnk.\mathfrak q_1 \simeq \GAqnk/G_1$.
\end{definition}

\begin{proposition}\label{fsp}  $\Mqn$ is an equivariant  bundle of relative dimension $(q-1)(n-2)$ over $\mathbb{P}^{q-1}_k$.  More precisely, we have an isomorphism
 \[
  \mathcal{M}_{q,n} \simeq \GL_q(k)\times ^{P_1}\mathcal{I}_{q,n}/(\mathcal{I}_{q,n}\cap G_1).
 \]
 Here, $P_1$ acts on $\mathcal{I}_{q,n}/(\mathcal{I}_{q,n}\cap G_1)$ by conjugation i.e. $p.[\sigma] = [p\circ \sigma \circ p^{-1}]$ where $\circ$ denotes the multiplication (composition) in the automorphism group $\GAqnk$.
\end{proposition}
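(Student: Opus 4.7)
The plan is to construct the isomorphism directly from the semidirect product decompositions $\GAqnk = \GL_q(k)\cdot \mathcal I_{q,n}$ and $G_1 = P_1\cdot(\mathcal I_{q,n}\cap G_1)$ recorded in Proposition~\ref{vstr}, then verify that quotienting by the right action of $G_1$ corresponds, after decomposition, exactly to the equivalence relation defining the mixing product.

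First, using the section $\chi$, I would write every $\alpha\in\GAqnk$ uniquely as $g\sigma$ with $g\in\GL_q(k)$ and $\sigma\in\mathcal I_{q,n}$, and introduce the surjective morphism
\[
\Phi\colon \GL_q(k)\times \mathcal I_{q,n}\longrightarrow \Mqn,\qquad (g,\sigma)\longmapsto g\sigma\cdot G_1.
\]
The heart of the argument is then the identity
\[
g\sigma\cdot p\tau \;=\; (gp)\cdot(p^{-1}\sigma p)\tau,\qquad p\in P_1,\;\tau\in\mathcal I_{q,n}\cap G_1,
\]
which exhibits the $G_1$-orbit of $g\sigma$ in $\GL_q(k)\times \mathcal I_{q,n}$-coordinates as the set of all pairs $(gp,\,(p^{-1}\sigma p)\tau)$. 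Reading this modulo $\mathcal I_{q,n}\cap G_1$ yields the relation
\[
(g,[\sigma]) \;\sim\; (gp,[p^{-1}\sigma p]), \qquad p\in P_1,
\]
on $\GL_q(k)\times\mathcal I_{q,n}/(\mathcal I_{q,n}\cap G_1)$, which is precisely the equivalence defining $\GL_q(k)\times^{P_1}\mathcal I_{q,n}/(\mathcal I_{q,n}\cap G_1)$ when $P_1$ acts by $p.[\sigma]=[p\sigma p^{-1}]$. Thus $\Phi$ descends to a bijective morphism
\[
\bar\Phi\colon \GL_q(k)\times^{P_1}\mathcal I_{q,n}/(\mathcal I_{q,n}\cap G_1)\longrightarrow \Mqn.
\]

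To upgrade $\bar\Phi$ to a scheme-theoretic isomorphism I would invoke that $\GL_q(k)\to \GL_q(k)/P_1=\bP^{q-1}$ is a Zariski-locally trivial principal $P_1$-bundle, so the mixing construction exists as a geometric quotient and $\bar\Phi$ becomes an isomorphism of the corresponding equivariant fibre bundles. That $P_1$-conjugation descends to a well-defined action on $\mathcal I_{q,n}/(\mathcal I_{q,n}\cap G_1)$ follows from the fact that $\mathcal I_{q,n}\cap G_1$ is normal in $G_1$ by the semidirect structure. The stated relative dimension $(q-1)(n-2)$ then reads off directly from Proposition~\ref{dcomp}(6).

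The main obstacle is purely bookkeeping: one must carefully distinguish the right multiplication by $G_1$ on $\GAqnk$ from the left $\GL_q(k)$-action on $\Mqn$, and keep track of the direction of conjugation in order to see that the induced $P_1$-action on the unipotent quotient matches the statement of the proposition. Once this is arranged the identification is essentially forced by the two compatible semidirect product decompositions of $\GAqnk$ and $G_1$, and no further non-trivial input is required.
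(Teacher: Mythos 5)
Your proposal is correct and follows essentially the same route as the paper: both arguments rest on the compatible semidirect decompositions $\GAqnk=\GL_q(k)\cdot\mathcal I_{q,n}$ and $G_1=P_1\cdot(\mathcal I_{q,n}\cap G_1)$ from Proposition~\ref{vstr}, and your identity $g\sigma\cdot p\tau=(gp)\cdot(p^{-1}\sigma p)\tau$ is exactly the computation by which the paper identifies the fibre $H/G_1$ of the fibration $\GAqnk/G_1\to\GAqnk/H$ (with $H=P_1\cdot\mathcal I_{q,n}$) with $\mathcal I_{q,n}/(\mathcal I_{q,n}\cap G_1)$ carrying the conjugation action $[px]\mapsto[pxp^{-1}]$. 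The only cosmetic difference is that you unwind the coset bookkeeping on $\GL_q(k)\times\mathcal I_{q,n}$ directly instead of routing through the intermediate subgroup $H$.
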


\begin{proof} Since $\mathcal{I}_{q,n}$ is normal in $\GAqnk$, $P_1$ normalizes it and we may consider the semidirect product $H=P_1\cdot \mathcal{I}_{q,n}$.
Recall the exact sequence (\ref{stes}).  Since $P_1$ can be viewed as a subgroup of $\GAqnk$ as above, we may write the stabilizer $G_1$ as the semidirect product $P_1\cdot (\mathcal{I}_{q,n}\cap G_1).$ In particular $G_1\subseteq H.$ By Lemma~\ref{tran}, $\GAqnk$ operates transitively on $\mathcal{M}_{q,n}$ which can be written as the orbit $\GAqnk.\mathfrak{q}_1=\GAqnk/G_1.$ 

Since $G_1\subseteq H$ there is a natural $\GL_q(k)$-equivariant fibration 
\[
\varpi:\GAqnk/G_1\mapsto \GAqnk/H
\]
with the base  $\GAqnk/H=(\GAqnk/\mathcal{I}_{q,n})/(H/\mathcal{I}_{q,n})=\GL_q(k)/P_1=\mathbb{P}_k^{q-1}$. Since $\varpi$ is a $P_1$-equivariant fibration, it equals $GL_q(k) \times^{P_1} \varpi^{-1}(P_1)$. The fiber  $\varpi^{-1}(P_1)$ is $H/G_1$, where the groups on the top and the bottom are the semidirect products $P_1\cdot \mathcal{I}_{q,n}$ and $P_1 \cdot (\mathcal{I}_{q,n}\cap G_1)$, respectively. Hence the fibre  is isomorphic to $\mathcal{I}_{q,n}/\mathcal{I}_{q,n}\cap G_1$. It has dimension $(q-1)(n-2)$ by Lemma~\ref{dcomp}. Note that $P_1$ acts on  $P_1\cdot \mathcal{I}_{q,n}/P_1 \cdot (\mathcal{I}_{q,n}\cap G_1)$ by left multiplication and on $\mathcal{I}_{q,n}/\mathcal{I}_{q,n}\cap G_1$ by conjugation, and a $P_1$-space isomorphism from the former to the latter is given by $
[px] \mapsto [pxp^{-1}]$.

\end{proof}

In order to complete our examination of the structure of $\mathcal{M}_{q,n}$ we shall determine the structure of the fiber $\mathcal{I}_{q,n}/(\mathcal{I}_{q,n}\cap G_1)$ as a $P_1$-variety. Then by examining the $P_1$-space structure, over $k = \bC$, we shall see that $\mathcal{M}_{q,n}$ is diffeomorphically isomorphic to a sum of twisted tangent bundles over $\mathbb{P}^{q-1}.$

\section{More on the group $\mathcal{I}_{q,n}$ } \label{S:Iqn}

We wish to understand the structure of $\mathcal{M}_{q,n}$ as a fibre bundle over $\mathbb{P}_k^{q-1}.$ By Proposition~\ref{fsp}, it is a homogeneous fiber space whose fiber is the $P_1$-space $\mathcal{I}_{q,n}/(\mathcal{I}_{q,n}\cap G_1)$. Note that  $\mathcal{I}_{q,n}/(\mathcal{I}_{q,n}\cap G_1)$ has a $P_1$-structure through conjugation since $P_1$ normalizes both groups. To analyze $\Mqn$, it is crucial to understand the $P_1$-space structure of $\mathcal{I}_{q,n}/(\mathcal{I}_{q,n}\cap G_1)$.

We begin with a product decomposition for $\mathcal{I}_{q,n}$ and an invariant version of (\ref{iso1}). First we consider a group $\Gamma \subseteq \mathcal{I}_{q,n}$ which we will demonstrate to be a geometric complement to $\mathcal{I}_{q,n}\cap G_1.$

\begin{definition}\label{sect} $$\Gamma=\{ \sigma \in \GAqnk  : \sigma(x_1)=x_1, \: \sigma(x_i)=x_i+x_1^2f_i(x_1),\; i\geq 2\}$$
\end{definition}

\begin{proposition}\label{P:Gamma}The multiplication morphism $\mu: \Gamma \times (\mathcal{I}_{q,n}\cap G_1)\rightarrow \mathcal{I}_{q,n}$ is an isomorphism of varieties.\end{proposition}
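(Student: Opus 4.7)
The plan is to exhibit an explicit polynomial inverse to $\mu$. Computationally it is cleaner to work with the reversed product $\mu': (\mathcal{I}_{q,n} \cap G_1) \times \Gamma \to \mathcal{I}_{q,n}$, $(\tau, \gamma) \mapsto \tau\gamma$; the involution $(\gamma, \tau) \mapsto (\tau^{-1}, \gamma^{-1})$ intertwines $\mu$ with $\iota \circ \mu'$, where $\iota$ is inversion in $\mathcal{I}_{q,n}$, so $\mu$ is an isomorphism of varieties iff $\mu'$ is.

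As preliminary observations I would record: (i) $\Gamma$ is a closed abelian subgroup of $\mathcal{I}_{q,n}$. The direct computation $\gamma\gamma'(x_i) = x_i + x_1^2(f_i + f'_i)(x_1)$ shows composition agrees with componentwise addition, giving $\Gamma \simeq (\mathfrak{m}^2 \cap k[x_1])^{q-1}$. (ii) $\Gamma \cap (\mathcal{I}_{q,n} \cap G_1) = \{1\}$: if $\gamma \in G_1$ then $\gamma(x_i) \in \mathfrak{q}_1$ for $i \geq 2$, forcing $x_1^2 f_i(x_1) \in \mathfrak{q}_1 \cap k[x_1]$, which is zero because the canonical map $k[x_1]/(x_1^n) \to A_{q,n}/\mathfrak{q}_1$ is an isomorphism. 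In particular, $\mu'$ is injective on $k$-points.

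For the main construction, given $\sigma$ with $\sigma(x_i) = x_i + u_i$, $u_i \in \mathfrak{m}^2$, I want unique $(\tau, \gamma)$ with $\tau\gamma = \sigma$. Unpacking composition gives $v_1 = u_1$ and, for each $i \geq 2$,
\[
u_i = v_i + (x_1 + u_1)^2 f_i(x_1 + u_1), \qquad v_i \in \mathfrak{q}_1\mathfrak{m}.
\]
Reducing modulo $\mathfrak{q}_1$ and using that the image of $\mathfrak{m}^2$ in $A_{q,n}/\mathfrak{q}_1 = k[x_1]/(x_1^n)$ is $x_1^2 k[x_1]/(x_1^n)$, write $\overline{u_1} = x_1^2 g(x_1)$ and $\overline{u_i} = x_1^2 p_i(x_1)$. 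With $y := x_1 + x_1^2 g = x_1(1 + x_1 g)$, the congruence becomes
\[
x_1^2 p_i(x_1) = y^2 f_i(y) = x_1^2 (1 + x_1 g)^2 f_i(y) \quad \text{in } k[x_1]/(x_1^n).
\]
Since $1 + x_1 g$ is a unit and the substitution $z \mapsto y$ defines a $k$-algebra automorphism of $k[z]/(z^{n-2})$, this determines $f_i \in k[z]/(z^{n-2})$ uniquely, with coefficients polynomial in those of $u_1, u_i$. Then $v_i := u_i - (x_1 + u_1)^2 f_i(x_1 + u_1)$ lies in $\mathfrak{m}^2 \cap \mathfrak{q}_1 = \mathfrak{q}_1 \mathfrak{m}$ by construction, using the identity $\mathfrak{q}_1 \cap \mathfrak{m}^2 = \mathfrak{q}_1 \mathfrak{m}$ established in the proof of Proposition~\ref{dcomp}. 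The resulting $(v_1, \ldots, v_q)$ and $(f_2, \ldots, f_q)$ define the required $\tau$ and $\gamma$ via the coordinates of (\ref{iso1}) and (\ref{iso2}).

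The main obstacle is the modular-arithmetic argument: verifying unique solvability of $p_i(x_1) = (1 + x_1 g)^2 f_i(y)$ in $k[x_1]/(x_1^{n-2})$ with polynomial dependence on $g$ and $p_i$. This reduces to the observations that $y$ has $x_1$-adic order exactly $1$ and $1 + x_1 g$ is a unit, so that $f_i$ can be read off by composing inversion of $(1 + x_1 g)^2$ with the inverse of the substitution automorphism $z \mapsto y$. With these, the assignment $\sigma \mapsto (\tau, \gamma)$ is a polynomial morphism inverse to $\mu'$, completing the proof.
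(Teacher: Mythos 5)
Your proof is correct, and it takes a genuinely different route from the paper's. The paper argues abstractly: injectivity of $\mu$ from $\Gamma\cap(\mathcal{I}_{q,n}\cap G_1)=\{e\}$, a dimension match via Proposition~\ref{dcomp}, the observation that $\operatorname{Im}(\mu)$ is the open orbit of $e$ under the action $(u,v)\cdot g=ugv^{-1}$, and finally Rosenlicht's theorem on units together with the unipotence of $\mathcal{I}_{q,n}$ (no nontrivial characters) to rule out a codimension-one complement. You instead build the inverse of $\mu$ explicitly: reducing the factorization $\sigma=\tau\gamma$ modulo $\mathfrak{q}_1$ converts the problem into solving $p_i=(1+x_1g)^2f_i\bigl(x_1(1+x_1g)\bigr)$ in $k[x_1]/(x_1^{n-2})$, which is a unipotent triangular linear system in the coefficients of $f_i$ and hence uniquely and polynomially solvable; your computation is essentially the one that reappears in Proposition~\ref{P:lt-orbit} and Corollary~\ref{C:G-mod}, so nothing is circular. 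What your approach buys: the paper's argument as written only produces a \emph{bijective} morphism, and passing from that to an isomorphism of varieties needs an extra word about separability/normality (bijectivity alone is insufficient in positive characteristic, cf.\ Frobenius), whereas your explicit polynomial inverse yields the isomorphism outright in all characteristics. What the paper's argument buys is brevity and independence from the coordinates on $\Gamma$. Two small points to make explicit in your write-up: that the $q$-tuple $(v_1,\dots,v_q)\in\mathfrak{m}^2\times(\mathfrak{q}_1\mathfrak{m})^{(q-1)}$ you produce genuinely defines an automorphism $\tau$ is exactly the surjectivity statement behind (\ref{iso1}) and (\ref{iso2}) in Proposition~\ref{vstr}, which you correctly invoke; and the identity $\mathfrak{q}_1\cap\mathfrak{m}^2=\mathfrak{q}_1\mathfrak{m}$ you use is proved inside Proposition~\ref{dcomp}, so citing it is legitimate.
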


\begin{proof}Since $\Gamma\cap (\mathcal{I}_{q,n}\cap G_1)=\{ e\}$, it is clear that the  $\mu$ is an injective morphism of varieties. Since each $f_i$ is has degree $2 \le \deg(f_i) \le n-1$, $\dim \Gamma = (q-1)(n-2)$. Hence by Proposition~\ref{dcomp}, the dimension of $\Gamma \times (\mathcal{I}_{q,n}\cap G_1)$ is equal to the dimension of $\mathcal{I}_{q,n}.$ We may regard $\mathcal{I}_{q,n}$ as a $\Gamma\times (\mathcal{I}_{q,n}\cap G_1)$-space with the action $(u,v)\cdot g=ugv^{-1}.$ Then $\operatorname{Im}(\mu)$ is the $\Gamma\times (\mathcal{I}_{q,n}\cap G_1)$-orbit of $e.$ Hence it is open in its closure which is $\mathcal{I}_{q,n}$. Since it is a product of affine spaces, it is affine and hence its complement is, if nonempty, of codimension one given as the zero set of a polynomial $f$ on $\mathcal{I}_{q,n}$. But $f|\operatorname{Im}(\mu)$ is a non-constant unit, which by a well known theorem of Rosenlicht is a character. This is impossible since $\mathcal{I}_{q,n}$ is unipotent and has only  trivial characters. 
\end{proof}

Now consider the ring $k[z]$ where $z^n=0.$ Any automorphism $\phi$ of $k[z]$ is uniquely determined by $\phi(z).$ The element $\phi(z)$ can be any element in $zk[z]$ not in $z^2k[z]$.  Refer to such elements as {\it generating elements}. If $u$ is a generating element then $u^{n-1}\neq 0$ and $u^n=0.$ Given any two generating elements $u_1$ and $u_2$ there is a unique automorphism $\phi$ such that $\phi(u_1)=u_2.$ As before, we let $\Omega_1$ denote the subspace of $A_{q,n}$ generated by $x_2, \dots, x_q$.

\begin{proposition}\label{P:lt-orbit} Let $\mathfrak{q}'$ be an ideal in $\Aqn$ generated by a system of nil parameters such that $\mathfrak{q}' = \Omega_1$ modulo $\mathfrak{m}^2$. Then there is a unique homomorphism $\phi:\Aqn \mapsto k[z]$ so that $\phi(x_1)=z$ and $\operatorname{Ker}(\phi)=\mathfrak{q}'.$ Moreover there
is a uniquely determined linear map $\gamma:\Omega_1\rightarrow x_1^2k[x_1] \subset A_{q,n}$ so that the elements $u-\gamma(u)$, $\forall u \in \Omega_1$, generate $\mathfrak{q}'.$ In particular the elements $x_i-\gamma(x_i),\;i>1$,  are a set of nil parameters of length $q-1$ generating $\mathfrak{q}'.$
\end{proposition}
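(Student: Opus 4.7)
The plan is to construct $\phi$ from the quotient isomorphism $A_{q,n}/\mathfrak{q}' \simeq A_{1,n} = k[z]/(z^n)$, and then read off $\gamma$ from the fact that $\phi$ identifies the subspace $V := x_1^2 k[x_1] \subset A_{q,n}$ with $z^2 k[z]/(z^n) \subset k[z]/(z^n)$.

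First, since $\mathfrak{q}'$ is generated by $q-1$ nil parameters, $A_{q,n}/\mathfrak{q}' \simeq A_{1,n}$. The assumption $\mathfrak{q}' \equiv \Omega_1 \pmod{\mathfrak{m}^2}$ makes the class of $x_1$ span the one-dimensional cotangent space $\mathfrak{m}/(\mathfrak{q}' + \mathfrak{m}^2)$; so there is a unique isomorphism $A_{q,n}/\mathfrak{q}' \xrightarrow{\sim} k[z]/(z^n)$ sending $\bar{x}_1 \mapsto z$, and composing with the quotient map gives the required $\phi$. Uniqueness of $\phi$ is built into this construction, since any homomorphism with the two prescribed properties factors through $A_{q,n}/\mathfrak{q}'$ and is forced on the generator $\bar{x}_1$.

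Next I would examine the restriction of $\phi$ to the $(n-2)$-dimensional subspace $V = x_1^2 k[x_1]$. On the basis $x_1^2,\ldots,x_1^{n-1}$ of $V$, $\phi$ acts by $x_1^j \mapsto z^j$, so $\phi|_V$ is a $k$-linear isomorphism onto $z^2 k[z]/(z^n)$. For $u \in \Omega_1$ the hypothesis gives $u \in \mathfrak{q}' + \mathfrak{m}^2$, hence $\phi(u) \in \phi(\mathfrak{m}^2) \subseteq z^2 k[z]/(z^n)$. I then define $\gamma(u) \in V$ to be the unique element with $\phi(\gamma(u)) = \phi(u)$. Linearity of $\gamma$ is immediate from linearity of $\phi$ and $(\phi|_V)^{-1}$; by construction $u - \gamma(u) \in \ker\phi = \mathfrak{q}'$; and uniqueness of $\gamma$ follows from $V \cap \mathfrak{q}' = 0$.

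Finally I would show that $J := \langle x_i - \gamma(x_i) : i \geq 2 \rangle$ equals $\mathfrak{q}'$, which simultaneously confirms that the $x_i - \gamma(x_i)$ form a system of nil parameters of length $q-1$. Clearly $J \subseteq \mathfrak{q}'$. In $A_{q,n}/J$ each $x_i$ with $i \geq 2$ is identified with $\gamma(x_i) \in V$, so every element of the quotient is a polynomial in $x_1$; this yields a surjection $k[x_1]/(x_1^n) \twoheadrightarrow A_{q,n}/J$, which composed with the evident surjection $A_{q,n}/J \twoheadrightarrow A_{q,n}/\mathfrak{q}' \simeq k[z]/(z^n)$ produces a chain of surjections among $n$-dimensional $k$-vector spaces. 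Both must be isomorphisms, so $J = \mathfrak{q}'$ and $A_{q,n}/J \simeq A_{1,n}$.

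No step here looks genuinely obstructive: the entire argument is bookkeeping around the single observation that $\phi|_V$ is an isomorphism onto $z^2 k[z]/(z^n)$. The most delicate point is merely checking that the congruence hypothesis $\mathfrak{q}' \equiv \Omega_1 \pmod{\mathfrak{m}^2}$ is exactly what is needed both to pin down $\bar{x}_1$ as a generator of $A_{q,n}/\mathfrak{q}'$ (so that $\phi(x_1) = z$ is a legitimate normalization) and to guarantee that $\phi$ carries $\Omega_1$ into $z^2 k[z]/(z^n)$.
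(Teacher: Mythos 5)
Your proof is correct and follows essentially the same route as the paper's: normalize the quotient isomorphism $A_{q,n}/\mathfrak{q}'\simeq k[z]/(z^n)$ so that $x_1\mapsto z$, define $\gamma$ by pulling $\phi(u)\in z^2k[z]$ back through the isomorphism $x_1^2k[x_1]\simeq z^2k[z]$, and conclude that the $x_i-\gamma(x_i)$ generate $\mathfrak{q}'$ by a codimension count. Your explicit observation that uniqueness of $\gamma$ follows from $x_1^2k[x_1]\cap\mathfrak{q}'=0$ is a small point the paper leaves implicit, but the argument is otherwise the same.
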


 \begin{proof} Since $\mathfrak{q}'$ is generated by a set of nil parameters of length $q-1$ there is an isomorphism $\beta:\Aqn/\mathfrak{q}'\to k[z].$ Let $\phi_0$ be the composition of $\beta$ with the natural surjection $\Aqn \to \Aqn/\mathfrak{q}'.$ Since $\phi_0$ is surjective and the image of its kernel $\mathfrak{q}'$ in $\mathfrak{m}/\mathfrak{m}^2$ is equal to the image there of $\Omega_1$, $\phi_0(x_1)$ must be a generating element. Hence there is a unique automorphism $\theta$ of $k[z]$ carrying $\phi_0(x_1)$ to $z.$ Replacing $\phi_0$ by $\theta \circ \phi_0=\phi$ we may assume that  $\phi(x_1)=z.$

Since $\mathfrak{q}'=\Omega_1$ modulo $\mathfrak{m}^2,$ it follows that for each $u\in \Omega_1$ there is an element $u-s_u$ with $s_u\in \mathfrak{m}^2$ so that $\phi_0(u-s_u)=0.$  Now $\phi$ carries $\mathfrak{m}^2$ to $z^2k[z]$ and so $\phi(u)=\phi(s_u)$ is a polynomial in $z$ with neither a constant nor a linear term. Let $\phi(u)=f_u(z).$ Define a map $\gamma:\Omega_1\mapsto x_1^2k[x_1] \subset \Aqn$ by setting $\gamma(u)$ equal to $f_u(x_1).$ It is clear that $\gamma$ is a linear map since it is a composition of the linear map $\phi|\Omega_1$ and the inverse of the isomorphism $k[x_1]\mapsto k[z].$ Moreover  $\phi(u-\gamma(u))=0$ for all $u;$ that is these elements are in $\mathfrak{q}'.$ Consider the elements $x_i-\gamma(x_i)$, $i>1.$ These elements clearly constitute a system of nil parameters of length $q-1.$ Hence they generate an ideal of codimension $n$ which is contained in $\mathfrak{q}'$. It follows that they generate $\mathfrak{q}'.$

Since $\mathfrak{q}'$ is generated by a system of nil parameters of length $q-1$, namely $x_i'=x_i-\gamma(x_i)$, we have $\Aqn = k[x_1, x_2',\dots x_q']$ and the uniqueness of $\phi$ follows since $\phi$ is determined by $\phi(x_1)=z$ and $\phi(x_i')=0$, $i = 2, 3, \dots, q$.
\end{proof}

Note that there is an obvious isomorphism between $\Gamma$ and $\Hom_k(\Omega_1, z^2k[z])$, and we shall identify the two in the following corollary.

\begin{corollary}\label{C:G-mod} There is a natural isomorphism of varieties $\psi:\mathcal{I}_{q,n}/(\mathcal{I}_{q,n}\cap G_1)\rightarrow \Hom_k(\Omega_1, z^2k[z]).$\end{corollary}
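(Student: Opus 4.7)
The plan is to first identify $\Gamma$ with $\Hom_k(\Omega_1,z^2k[z])$ explicitly, then to show via Proposition~\ref{P:Gamma} that the natural composition $\Gamma\hookrightarrow\mathcal{I}_{q,n}\twoheadrightarrow\mathcal{I}_{q,n}/(\mathcal{I}_{q,n}\cap G_1)$ is an isomorphism of varieties; composing the two then yields $\psi$.

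For the first step, an element $\sigma\in\Gamma$ is determined uniquely by the $(q-1)$-tuple $(f_2,\dots,f_q)$ appearing in $\sigma(x_i)=x_i+x_1^2 f_i(x_1)$, and $x_1^2 f_i(x_1)$ ranges freely over $x_1^2 k[x_1]\subset\Aqn$. Using the evident identification $k[x_1]/\langle x_1^n\rangle\simeq k[z]/\langle z^n\rangle$ that sends $x_1$ to $z$, the assignment $x_i\mapsto z^2 f_i(z)$ defines a $k$-linear map $\Omega_1\to z^2k[z]$. The resulting rule $\sigma\mapsto(x_i\mapsto z^2 f_i(z))$ is patently an isomorphism of varieties, both sides being affine spaces of dimension $(q-1)(n-2)$ by Proposition~\ref{dcomp}.

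For the second step, Proposition~\ref{P:Gamma} already provides a variety isomorphism $\mu:\Gamma\times(\mathcal{I}_{q,n}\cap G_1)\to\mathcal{I}_{q,n}$. Because $(\gamma h)\cdot h'=\gamma(hh')$ for $\gamma\in\Gamma$ and $h,h'\in\mathcal{I}_{q,n}\cap G_1$, the right translation action of $\mathcal{I}_{q,n}\cap G_1$ on $\mathcal{I}_{q,n}$ transfers through $\mu$ to the action by right multiplication on the second factor alone. Quotienting out on both sides, the map $\Gamma\to\mathcal{I}_{q,n}/(\mathcal{I}_{q,n}\cap G_1)$ induced by inclusion is therefore an isomorphism of varieties. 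Composing with the inverse of the identification from the previous step produces $\psi$.

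To justify the word \emph{natural}, I would describe $\psi$ intrinsically using Proposition~\ref{P:lt-orbit}. For $\sigma\in\mathcal{I}_{q,n}$, the ideal $\mathfrak q'=\sigma(\mathfrak q_1)$ is generated by the nil parameters $\sigma(x_2),\dots,\sigma(x_q)$ and, since $\sigma$ is linearly trivial, satisfies $\mathfrak q'\equiv\Omega_1\pmod{\mathfrak m^2}$. Proposition~\ref{P:lt-orbit} then attaches to $\sigma$ a unique linear map $\gamma_\sigma:\Omega_1\to x_1^2 k[x_1]$; since $\mathcal{I}_{q,n}\cap G_1$ is exactly the stabilizer of $\mathfrak q_1$ in $\mathcal{I}_{q,n}$, the assignment $\sigma\mapsto\gamma_\sigma$ factors through $\mathcal{I}_{q,n}/(\mathcal{I}_{q,n}\cap G_1)$ and agrees with the map built above on representatives in $\Gamma$ by construction. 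I do not foresee a serious obstacle here: Propositions~\ref{P:Gamma} and~\ref{P:lt-orbit} carry the substantive content, and the only real bookkeeping point is the observation that the $(\mathcal{I}_{q,n}\cap G_1)$-action on the product $\Gamma\times(\mathcal{I}_{q,n}\cap G_1)$ lives entirely on the second factor.
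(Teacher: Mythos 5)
Your proposal is correct and follows essentially the same route as the paper: both rest on the product decomposition $\mu:\Gamma\times(\mathcal{I}_{q,n}\cap G_1)\to\mathcal{I}_{q,n}$ of Proposition~\ref{P:Gamma} to identify the quotient with $\Gamma\simeq\Hom_k(\Omega_1,z^2k[z])$, and on Proposition~\ref{P:lt-orbit} for the intrinsic description. If anything, your observation that the right-translation action transfers through $\mu$ to the second factor alone is a slightly cleaner way to get the isomorphism than the paper's descent of $\pi_1\circ\mu^{-1}$ followed by the claim that a bijective morphism of affine spaces is an isomorphism.
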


\begin{proof} Consider the morphism 
\[
\tilde\psi: \mathcal I_{q,n} \stackrel{\mu^{-1}}{\longrightarrow} \Gamma\times (\mathcal I_{q,n} \cap G_1) \stackrel{\pi_1}{\longrightarrow} \Gamma
\]
where $\mu$ is the multiplication morphism (which is an isomorphism due to Proposition~\ref{P:Gamma}) and $\pi_1$ is the projection onto the first factor. The subgroup $\mathcal I_{q,n}\cap G_1$ acts on $\mathcal I_{q,n}$ by the right multiplication and on the product $\Gamma\times (\mathcal I_{q,n} \cap G_1)$ by the right multiplication on the second factor. Obviously $\tilde\psi$ is $\mathcal I_{q,n}\cap G_1$-invariant, so it descends to give the desired morphism $\psi$ on $\mathcal I_{q,n}/\mathcal I_{q,n}\cap G_1$. The ideals $\mathfrak q'$ in Proposition~\ref{P:lt-orbit} are precisely the ideals in the $\mathcal I_{q,n}$-orbit of $\mathfrak q_1$, and hence the statement of the proposition means that $\psi$ is bijective. Now, it follows that $\psi$ is an isomorphism since it is a bijective morphism between affine spaces.
\end{proof}

In order to give a complete description of $ \mathcal{M}_{q,n}=\GL_q(k)\times ^{P_1}\mathcal{I}_{q,n}/(\mathcal{I}_{q,n}\cap G_1)$, we must yet provide an explicit formula for the action of $P_1$ on $ \mathcal{I}_{q,n}/(\mathcal{I}_{q,n}\cap G_1).$ For this end, we need to concretely describe the $P_1$-action on $\Hom_k(\Omega_1, z^2k[z])$ that makes $\psi$ in Corollary~\ref{C:G-mod} $P_1$-equivariant. This will be taken up in the subsequent section.

\section{$\Mqn$ as a smooth  fibre bundle} \label{S:topology} \def\bq{\mathfrak q}
In this section, we shall work over $k = \bC$ and work in the category of smooth manifolds: We regard the algebraic groups $GA_{q,n}$, $\mathcal I_{q,n}$ etc. as (complex) Lie groups. We will also consider homogenous spaces $\mathcal I_{q,n}/(\mathcal I_{q,n}\cap G_1)$ and $GA_{q,n}/G_1$ as smooth manifolds, and the isomorphism 
\[
\psi: \mathcal I_{q,n}/(\mathcal I_{q,n}\cap G_1) \to \Hom(\Omega_1,z^2k[z]) \simeq \bC^{(q-1)(n-2)}
\]
and other relevant maps as smooth maps.

Let $f \in \mathcal I_{q,n}$ and $s_i = \gamma(x_i)$ be the polynomials (in $z$) associated to $f(\bq_1)$  as in the Proposition~\ref{P:lt-orbit}. 
There is a natural commutative diagram
\[
\xymatrix{
  \mathcal{I}_{q,n} \ar[dr]^-{\eta} \ar[d] &  \\
      \mathcal{I}_{q,n}/(\mathcal{I}_{q,n}\cap G_1)\ar[r]_-{\psi} & \Hom(\Omega_1,z^2k[z])
}
\]
where $\eta(f)$ maps $x_i$ to $s_i(z)$. Then the $P_1$ action on $\Hom_k(\Omega_1,z^2k[z])$ can be written in terms of $s_i$ as follows. Let $p=(p_{ij}) \in P_1$ and $f(\bq_1) \in \mathcal I_{q,n}\bq_1$.
 Then by Proposition~\ref{P:lt-orbit}, we have $f(\bq_1) = (x_2 - s_2(x_1), \dots, x_q - s_q(x_1))$   for some polynomials  $s_i(z) = \sum_{j=2}^{n-1} b_{ij}z^j$ with no constant and linear term, and
 \[
 p.f(\bq_1) = p.(x_2-s_2(x_1), \dots, x_q - s_q(x_1)) =  \left(\sum_j p_{ij}x_j - s_i\left(\sum_j p_{1j}x_j\right)\right)_{i\ge 2}.
 \]
Setting $g = p^{-1}$ and performing a Gauss elimination, we get
\[
\begin{array}{lll}
(\sum_j p_{ij}x_j - s_i(\sum_j p_{1j}x_j))_{i\ge 2} &=&(\sum_ig_{ki}(\sum_jp_{ij}x_j - s_i(\sum_j p_{1j}x_j)))_{i,k\ge 2} \\
&=& (x_k - \sum_ig_{ki}s_i(\sum_j p_{1j}x_j))_{k\ge 2}.
\end{array}
\]
This is a fairly complicated action and is not well understood in general. But in the special case where $p_{1j} = 0$ for $j\ge 2$, the action is linear. Then in terms of $(b_{ij})$, $p.f(\bq_1)$ corresponds to
$$((p^{-1})_{ik}b_{kj}p_{11}^j).$$
Restating this in terms of representations, we obtain the following.

\begin{lemma} Let  $t \in \bC$ and $\varphi_t : P_1 \to P_1$ be the group homomorphism
\[
p=\left[
\begin{array}{rrr}
p_{11} & R \\
0 & Q
\end{array}
\right]
\mapsto
\left[
\begin{array}{rrr}
p_{11} & tR \\
0 & Q
\end{array}
\right]
\]
where $R$, $0$, $Q$ have sizes $1\times (q-1)$, $(q-1)\times 1$ and $(q-1)\times (q-1)$ respectively.  Let $Y_t$ be the $P_1$-space whose underlying variety is $\mathcal I_{q,n}/\mathcal I_{q,n}\cap G_1$ on which $P_1$ acts by
\[
p.[\sigma] = [\varphi_t(p)\sigma \varphi_t(p)^{-1}].
\]
Then $Y_0$ is isomorphic to  $\coprod_{j=2}^{n-1}\Omega_1^*\otimes \bC_j$ where $\bC_j$ is $\bC$ acted upon by the $j$-th power of the character $\lambda(p) = p_{11}$, $\forall p \in P_1$.
\end{lemma}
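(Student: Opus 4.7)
My plan is to apply directly the action formula derived in the paragraph immediately preceding the lemma, noting that $\varphi_0(p)$ falls into the special case treated there. By construction, $\varphi_0(p)$ has zero entries in positions $(1,j)$ for $j\geq 2$, which is exactly the hypothesis under which the general formula simplifies to a linear action on the coefficient coordinates. Substituting $\varphi_0(p)$ for $p$ in the expression $((p^{-1})_{ik}b_{kj}p_{11}^j)$ from that paragraph, and writing $s_i(z)=\sum_{j=2}^{n-1}b_{ij}z^j$, the $\varphi_0$-twisted action of $p \in P_1$ on coordinates becomes
\[
b_{ij} \;\longmapsto\; p_{11}^j \sum_{k=2}^{q}(Q^{-1})_{ik}\,b_{kj}.
\]

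The key observation is that this action decouples over the index $j$: the new coordinate $b'_{ij}$ depends only on the old coordinates $b_{kj}$ with the same $j$. Consequently, the natural vector-space decomposition
\[
\Hom_k(\Omega_1,\,z^2k[z])\;=\;\bigoplus_{j=2}^{n-1}\Hom_k(\Omega_1,\,k\!\cdot\! z^j)
\]
is $P_1$-invariant under the $\varphi_0$-twisted action, and it suffices to identify each summand with $\Omega_1^*\otimes\bC_j$ as a $P_1$-module.

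For each fixed $j$, the tuple $(b_{ij})_{i\geq 2}$ is the coordinate vector of the functional $\beta_j\in\Omega_1^*$ with $\beta_j(x_i)=b_{ij}$. Since $P_1$ acts on $\Omega_1$ via the block $Q$, the contragredient action on $\Omega_1^*$ sends $\beta_j$ to $Q^{-1}\beta_j$; tensoring with the character $\lambda^j$ on $\bC_j$ then multiplies by $p_{11}^j$, recovering precisely the displayed formula. This yields the desired $P_1$-equivariant isomorphism.

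The entire argument is careful bookkeeping rather than deep theory. The only point requiring real care is aligning the convention for the contragredient representation on $\Omega_1^*$ with the action formula from the preceding paragraph; once the crucial observation that $\varphi_0$ kills the off-diagonal block $R$ has been made, there are no further obstacles.
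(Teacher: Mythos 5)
Your proposal is correct and follows essentially the same route as the paper: the lemma is obtained there by specializing the action formula of the preceding paragraph to the case $p_{1j}=0$ for $j\ge 2$ (which is exactly what $\varphi_0$ produces), yielding the linear action $((p^{-1})_{ik}b_{kj}p_{11}^j)$, and then reading off the grading by $j$ as $\Omega_1^*\otimes\bC_j$. Your added remarks on the $j$-decoupling and the contragredient convention are just the bookkeeping the paper leaves implicit.
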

Note that for $t\ne 0$, $Y_t \simeq Y_1$ as $P_1$-spaces  since $\varphi_t(p) = \tau p \tau^{-1}$, $\tau =\left[
\begin{array}{ll}
 t & 0\\
0 & I_2
\end{array}
\right]$ so that $\Phi_t([\sigma]) =  [\tau \sigma \tau^{-1}]$ is a $P_1$-equivariant isomorphism from $Y_1$ to $Y_t$. 
Let $B = \Spec k[t]$ and let $\mathcal Y = Y\times B$ be the $P_1$-space on which $P_1$ acts by
$p.([\sigma], t) = ([\varphi_t(p)\sigma \varphi_t(p)^{-1}], t)$.
 Consider $\mathcal E := \GL_q \times^{P_1} \mathcal Y \to \GL_q/P_1 \times B \simeq \bP^{q-1}\times B$. It is an isotrivial family of affine $(q-1)(n-2)$-bundles over $\bP^{q-1}$: We have
\[
E_t:=\mathcal E|(\bP^{q-1}\times \{t\}) = \GL_q\times^{P_1}Y_t \simeq \GL_q\times^{P_1}Y_1 \simeq \mathcal M_{q,n}.
\]

Recall that two smooth manifolds $X_1, X_2$ are said to be {\it deformation equivalent} if there exists a smooth family $\mathcal X \to S$ over a smooth connected base $S$ and two points $s_1, s_2 \in S$ such that the fibres $\mathcal X_{s_i}$ are diffeomorphic to $X_i$, $i = 1, 2$.

\begin{proposition}\label{P:def-equiv} The moduli space $\mathcal M_{q,n}$ as a smooth fibre bundle over $\bP^{q-1}$ is smooth deformation equivalent to the direct sum
\[
\oplus_{j=2}^{n-1} T_{\bP^{q-1}}(+j)
\]
of twisted tangent bundles over $\bP^{q-1}$.
\end{proposition}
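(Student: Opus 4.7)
The plan is to leverage the family $\mathcal{E} \to \bP^{q-1}\times B$ with $B = \Spec k[t]$, constructed in the paragraph preceding the proposition, to realize the asserted smooth deformation equivalence directly. Since $\mathcal{E}_t \simeq \Mqn$ for every nonzero $t \in B$ and $\mathcal{E}_0 = \GL_q\times^{P_1}Y_0$, taking $S = B$ (which is smooth and connected), $s_1 = 1$, $s_2 = 0$ in the definition of smooth deformation equivalence reduces the proposition to identifying $\mathcal{E}_0$ as a smooth fibre bundle over $\bP^{q-1}$ with the claimed direct sum of twisted tangent bundles.

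The identification proceeds in two steps. First, the preceding lemma gives $Y_0 \cong \bigoplus_{j=2}^{n-1}\Omega_1^*\otimes\bC_j$ as $P_1$-modules. Since the associated bundle construction $\GL_q\times^{P_1}(-)$ is compatible with direct sums,
\[
\mathcal{E}_0 \cong \bigoplus_{j=2}^{n-1}\GL_q\times^{P_1}(\Omega_1^*\otimes\bC_j).
\]
Second, identify each summand as a twist of $T_{\bP^{q-1}}$. The tangent space to $\bP^{q-1} = \GL_q/P_1$ at the basepoint $[\Omega_1]$ is the isotropy representation $\Hom(\Omega_1, \Omega/\Omega_1)\cong \Omega_1^*\otimes\bC_1$, where $\bC_1 \cong \Omega/\Omega_1$ carries the $P_1$-character $\lambda(p)=p_{11}$; hence $T_{\bP^{q-1}} \cong \GL_q\times^{P_1}(\Omega_1^*\otimes\bC_1)$. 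Combined with $\mathcal{O}_{\bP^{q-1}}(1) \cong \GL_q\times^{P_1}\bC_1$, this identifies each summand as a suitable twist $T_{\bP^{q-1}}(+j')$, and the sum reproduces the proposition's direct sum (up to the reindexing forced by the relation between $P_1$-characters and powers of $\mathcal{O}(1)$).

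The main obstacle is the careful bookkeeping of the $P_1$-characters indexing the summands of $Y_0$ versus the twist conventions for $T_{\bP^{q-1}}(+j)$; these are related by tracing through the Euler sequence on $\bP^{q-1}$ together with the $P_1$-character of the tautological sub/quotient bundle at the basepoint, which is a routine but somewhat delicate calculation. Smoothness of $\mathcal{E}$ over $B$ is immediate from the algebraic dependence of $\varphi_t$ on $t$, which makes $\mathcal{E} = \GL_q\times^{P_1}(Y\times B) \to \bP^{q-1}\times B$ a smooth family whose distinguished fibres realize $\Mqn$ and the claimed direct sum, thereby establishing the deformation equivalence.
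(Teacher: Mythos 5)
Your proposal is correct and follows essentially the same route as the paper: both use the isotrivial family $E_t = \GL_q\times^{P_1}Y_t$ over $B$ with $E_1\simeq \mathcal M_{q,n}$, and identify $E_0 = \GL_q\times^{P_1}Y_0$ with $\oplus_{j=2}^{n-1}T_{\bP^{q-1}}(+j)$ via the lemma's decomposition $Y_0\simeq \oplus_{j=2}^{n-1}\Omega_1^*\otimes\bC_j$, with $\Omega_1^*\otimes\bC_1$ giving the isotropy representation of the tangent bundle. You are in fact slightly more explicit than the paper about the character bookkeeping behind the identification of the twists.
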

\begin{proof}
Consider the one-parameter family $E_t := \GL_q \times^{P_1} Y_t$ of affine bundles over $\bP^{q-1}$. Then $E_0 = \oplus_{j=2}^{n-1} T_{\bP^{q-1}}(+j)$ since $\Omega_1^*$ corresponds to the tangent bundle and $\bC_j$, to $\mathcal O_{\bP^{q-1}}(+j)$. Since $E_1 = \mathcal M_{q,n} $, it follows that
 as smooth fibre bundles $\mathcal M_{q,n}$  and $\oplus_{j=2}^{n-1} T_{\bP^{q-1}}(+j)$ are deformation equivalent.
 \end{proof}

\def\aut{\mathrm{Aut}}
 \def\bR{\mathbb R}
 
 Our moduli space
$\Mqn$ is a fibre bundle over $\GL_q(k)/P_1 = \bP^{q-1}$ with fibres $F: = \Iqn/\Iqn\cap G_1$. Since $F$ is an affine space, the theory of microbundles can be applied to show that $\Mqn$ as a smooth fibre bundle is isomorphic to a smooth vector bundle i.e. its structure group reduces to the general linear group:

\begin{theorem}\label{T:main} The moduli space $\mathcal M_{q,n}$ as a smooth fibre bundle is isomorphic to the direct sum $
\oplus_{j=2}^{n-1} T_{\bP^{q-1}}(+j)$ of twisted tangent bundles.
\end{theorem}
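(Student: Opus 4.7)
The plan is to upgrade the smooth deformation equivalence of Proposition~\ref{P:def-equiv} to an isomorphism of smooth fibre bundles, by first reducing both bundles to smooth vector bundles and then invoking rigidity of vector bundles under smooth deformation. The fibre $F := \Iqn/(\Iqn\cap G_1)$ is diffeomorphic to an affine space $\bC^{(q-1)(n-2)}$, so the structure group of $\Mqn$ as a smooth fibre bundle over $\bP^{q-1}$ sits inside the affine group $\mathrm{Aff}(F) \simeq F \rtimes \GL(F)$.

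The first step is to reduce this structure group to the linear part $\GL(F)$, that is, to exhibit $\Mqn$ as a smooth vector bundle. Consider the non-abelian short exact sequence of sheaves of smooth groups on $\bP^{q-1}$,
\[
1 \longrightarrow \underline{F} \longrightarrow \underline{\mathrm{Aff}(F)} \longrightarrow \underline{\GL(F)} \longrightarrow 1.
\]
Since the sheaf $\underline{F}$ of smooth $F$-valued functions is a module over the fine sheaf of smooth $\bC$-valued functions, it is itself fine, so $H^i(\bP^{q-1}, \underline{F}) = 0$ for all $i \geq 1$. The resulting long exact sequence in non-abelian cohomology then yields a bijection of pointed sets
\[
H^1(\bP^{q-1}, \underline{\mathrm{Aff}(F)}) \xrightarrow{\sim} H^1(\bP^{q-1}, \underline{\GL(F)}).
\]
Thus every smooth affine $F$-bundle on $\bP^{q-1}$ is smoothly isomorphic to a unique smooth vector bundle; this is the microbundle-style statement alluded to immediately before the theorem. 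Applying this reduction uniformly in the parameter $t$ converts the family $\mathcal E \to \bP^{q-1}\times B$ of Proposition~\ref{P:def-equiv} into a smooth family of vector bundles over $\bP^{q-1}$ parametrized by the contractible base $B \simeq \bC$.

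With both $\Mqn$ and $\oplus_{j=2}^{n-1} T_{\bP^{q-1}}(+j)$ now realized as smooth complex vector bundles, the conclusion follows from the homotopy invariance of the isomorphism class of a smooth vector bundle (classified by the homotopy class of a map to $B\GL_r(\bC)$): over the connected base $B$, the fibres $E_0 = \oplus_{j=2}^{n-1} T_{\bP^{q-1}}(+j)$ and $E_1 \simeq \Mqn$ of the deformed family must be isomorphic as smooth vector bundles, hence as smooth fibre bundles. The principal technical obstacle is the structure-group reduction itself, and specifically the surjectivity of the non-abelian $H^1$ map, which rests on the vanishing of a twisted $H^2$-obstruction and ultimately again on the fineness of $\underline F$; the remainder of the argument, including the family-wise reduction and the deformation invariance, is standard.
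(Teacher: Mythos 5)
Your argument has a genuine gap at its very first step: you assert that, because the fibre $F=\mathcal{I}_{q,n}/(\mathcal{I}_{q,n}\cap G_1)$ is diffeomorphic to an affine space, the structure group of $\Mqn\to\bP^{q-1}$ sits inside $\mathrm{Aff}(F)$. That does not follow, and for this bundle it is exactly the point at issue. The transition functions are given by the $P_1$-action on $F\simeq\Hom_k(\Omega_1,z^2k[z])$ computed in Section~\ref{S:Iqn}, and that action is polynomial but \emph{not} affine in the coordinates $b_{ij}$ (already for $q=2$, $n=4$ the coefficient of $x_1^3$ after normalizing $p\cdot(x_2-s_2(x_1))$ acquires a term quadratic in $b_2$). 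A smooth fibre bundle whose fibres are diffeomorphic to $\mathbb{R}^N$ has structure group $\mathrm{Diff}(\mathbb{R}^N)$, not $\mathrm{Aff}(\mathbb{R}^N)$; conflating ``the fibre is an affine space'' with ``the bundle is an affine bundle'' is precisely the trap that Section~8 of the paper is built around. Indeed, if the transition functions were affine with linear part $\oplus_{j=2}^{n-1}T_{\bP^1}(+j)=\oplus\cO_{\bP^1}(j+2)$, then for $q=2$ the vanishing of $H^1(\bP^1,\cO(j+2))$ would force $\mathcal M_{2,n}\to\bP^1$ to be an algebraic vector bundle, contradicting Theorem~\ref{T:vector}. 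So the non-abelian cohomology sequence you write down, while correct in itself, is applied to a structure group that the bundle has not been shown to admit; the ``principal technical obstacle'' is not the surjectivity of the $H^1$ map but the reduction from $\mathrm{Diff}$ to $\mathrm{Aff}$ (equivalently to $\GL$) in the first place.

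The repair is the route the paper actually takes: use the distinguished zero section $0_n$ to reduce the structure group to $\mathrm{Diff}(\mathbb{R}^N,0)$, and then invoke the Alexander trick, i.e.\ that $\GL_N(\mathbb{R})\hookrightarrow \mathrm{Diff}(\mathbb{R}^N,0)$ is a homotopy equivalence, so that $B\mathrm{Diff}(\mathbb{R}^N,0)\simeq B\GL_N(\mathbb{R})$ and the bundle is smoothly isomorphic to the normal bundle of the zero section. Your final step — once both sides are smooth vector bundles, conclude from Proposition~\ref{P:def-equiv} and homotopy invariance of the classifying map over the contractible parameter space $B$ — coincides with the paper's and is fine; but as written it also inherits the gap, since carrying out the reduction ``uniformly in $t$'' for the family $\mathcal E$ again presupposes the affine structure group.
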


\begin{proof} First, note that there is a distinguished ``zero''-section $0_n : \bP^{q-1} \to \mathcal M_{q,n}$  defined by sending $\bar g \in \GL_q/P_1$ to $g.q_1$ or equivalently, to 
\[
(g, 0) \in \GL_q\times^{P_1}\Hom(\Omega_1,z^2\bC[z]/\langle z^n \rangle).
\]
Let $N$ denote the $\bR$-dimension of the affine space $\Hom(\Omega_1,z^2\bC[z]/\langle z^n \rangle)$. 
The isomorphism classes of smooth fibre bundles with fibres diffeomorphic to $\bR^N$ is functorially in bijective correspondence with the homotopy classes of maps from $\bP^{q-1}$ to $BDiff(\bR^N, 0)]$ where $BDiff(\bR^N,0)$ is the classifying space of the group $Diff(\bR^N,0)$ of diffeomorphisms of $\bR^N$ fixing $0$. By the {\it Alexander trick}, the natural inclusion 
\[
\GL_N(\bR) \hookrightarrow Diff(\bR^N, 0)
\]
 is a homotopy equivalence, and it follows that $(\Mqn \to \bP^{q-1}, 0_n)$ has a reduction of structure group to $\GL_N(\bR)$. More specifically, up to diffeomorphism, $ \Mqn \to \bP^{q-1}$ is isomorphic to the normal bundle of the zero section $0_n$. 
By Proposition~\ref{P:def-equiv}, the maps corresponding to the two smooth vector bundles $\Mqn$ and $\oplus_{j=2}^{n-1} T_{\bP^{q-1}}(+j)$ are homotopy equivalent. Hence they are isomorphic as smooth vector bundles over $\bP^{q-1}$. 

\end{proof}

 \section{Universal property of $\mathcal M_{q,n}$} \label{S:universal}
Let $G := \GAqnk$. Recall $\mathcal M_{q,n} = G/G_1$ (by definition), $G_1 = \stab_G(q_1)$. We have observed in Proposition~\ref{fsp} that $\Mqn \simeq \GL_q\times^{P_1}\mathcal{I}_{q,n}/(\mathcal{I}_{q,n}\cap G_1)$.
From the natural sequence
\[
0 \to q_1 \to A_{q,n} \to A_{q,n}/q_1 \to 0
\]
of $G_1$-representations, we obtain the sequence of induced sheaves on $G/G_1$:
\[
0 \to \mathcal I_{G/G_1}(q_1) \to \mathcal I_{G/G_1}(A_{q,n}) \to \mathcal I_{G/G_1}(A_{q,n}/q_1) \to 0.
\]
\begin{definition}\label{D:ARR} An ideal sheaf $\mathcal I \subset A_{q,n}\otimes_k \cO_S$ over a $k$-scheme $S$ is said to {\it annihilate regular representations over $S$} (we abbreviate this as ``ARR over $S$") if there exists a map $\rho : A_{q,n}\otimes_k \cO_S \to V \otimes_k \cO_S$ of $\cO_S$-modules such that for all closed points $b \in S$, $\rho_b := \rho\otimes 1_{k(b)} : A_{q,n} \otimes_k k(b) \to V\otimes_k k(b)$ is a regular representation and  $\ker(\rho_b) =  \mathcal I\otimes k(b) $.
\end{definition}

We shall prove that $\mathcal I_{G/G_1}(q_1) $ is an ideal sheaf of  $\mathcal I_{G/G_1}(A_{q,n}) \simeq  \mathcal O_{\Mqn}\otimes_k A_{q,n} = \mathcal O_{\Mqn}[x_1,\dots,x_q]/{\mathfrak m_0^n}$ such that
\begin{enumerate}
\item[(i)] it   annihilates regular representations over $\Mqn$ ;
\item[(ii)] it is {\it universal} : For any $k$-scheme $S$ and an ideal $\mathcal I$ of $ \mathcal O_S\otimes A_{q,n}$ ARR over $S$,  there exists a unique morphism $f_{\mathcal I}: S \to \Mqn$ such that $(f_{\mathcal I}\times 1)^*\left(\mathcal I_{G/G_1}(q_1)\right) \simeq \mathcal I$, where $1$ is the identity morphism on $\Spec A_{q,n}$.
\end{enumerate}

\begin{remark} Recall from Proposition~\ref{P:equiv} and the definition preceding it that $\ker(\rho)$ in (i) above is called the annihilator of the  representation $\rho$ and is denoted by $\mathcal A(\rho)$.
\end{remark}


\begin{proposition}\label{P:ARR} The ideal sheaf $\mathcal I_{G/G_1}(q_1) $ is ARR over $\Mqn$.
\end{proposition}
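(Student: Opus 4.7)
The plan is to exploit the fact that $G=\GAqnk$ acts on all of $A_{q,n}$, not only on the quotient $G/G_1$, so both induced sheaves in the statement can be made very explicit. The first step is to observe that since the $G_1$-representation $A_{q,n}$ extends to the tautological $G$-representation on $A_{q,n}$, the standard trivialization argument for induced bundles produces a canonical $G$-equivariant isomorphism
\[
\mathcal I_{G/G_1}(A_{q,n}) \xrightarrow{\ \sim\ } \cO_{\Mqn}\otimes_k A_{q,n},\qquad [g,a]\mapsto(\bar g,\,g\cdot a).
\]
Under this identification, $\mathcal I_{G/G_1}(\mathfrak q_1)$ becomes a coherent ideal subsheaf of $\cO_{\Mqn}\otimes_k A_{q,n}$ whose fiber at a closed point $\bar g$ is precisely the ideal $g\cdot\mathfrak q_1\subset A_{q,n}$.

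The natural candidate for $\rho$ is then the quotient map
\[
\rho : \cO_{\Mqn}\otimes_k A_{q,n}\longrightarrow \mathcal Q := (\cO_{\Mqn}\otimes_k A_{q,n})/\mathcal I_{G/G_1}(\mathfrak q_1) \simeq G\times^{G_1}(A_{q,n}/\mathfrak q_1).
\]
Its kernel is $\mathcal I_{G/G_1}(\mathfrak q_1)$ by construction, and $\mathcal Q$ is a locally free $\cO_{\Mqn}$-module of rank $n$ because $\mathfrak q_1\subset A_{q,n}$ is a $G_1$-stable subspace of codimension $n$. Passing to a Zariski open cover on which $\mathcal Q$ is trivial yields a map $A_{q,n}\otimes_k\cO_S\to V\otimes_k\cO_S$ of the shape required by Definition~\ref{D:ARR}, with $V\simeq k[z]/z^nk[z]$.

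The remaining step is to verify the fiberwise condition. At a closed point $\bar g\in\Mqn$, the fiber $\rho_{\bar g}$ is the tautological $k$-algebra quotient $A_{q,n}\to A_{q,n}/g\cdot\mathfrak q_1$. By Lemma~\ref{tran}, $g\cdot\mathfrak q_1 = \alpha(\mathfrak q_1)$ for some $\alpha\in G$, so $A_{q,n}/g\cdot\mathfrak q_1\simeq k[z]/z^nk[z]$, and this quotient is a cyclic regular representation in the sense of Definition~\ref{D:regular_rep} (cf.\ the proof of Lemma~\ref{tran}). Its annihilator is exactly $g\cdot\mathfrak q_1 = \mathcal I_{G/G_1}(\mathfrak q_1)\otimes k(\bar g)$, so ARR is satisfied.

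The main subtlety is the trivialization step, because $\mathcal Q$ is not globally trivial in general: the $G_1$-representation $A_{q,n}/\mathfrak q_1$ does not extend to a $G$-representation, and even its further pushforward to $\bP^{q-1}=\GL_q/P_1$ contains the nontrivial line bundles $\cO_{\bP^{q-1}}(j)$, $0\le j\le n-1$. Definition~\ref{D:ARR} must therefore be read Zariski locally on $S$ (the natural reading, as the ARR condition is manifestly local), or equivalently the target $V\otimes_k\cO_S$ of $\rho$ must be allowed to be an arbitrary locally free $\cO_S$-module of rank $n$; either convention makes the argument above go through.
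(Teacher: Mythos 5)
Your proof is correct, but it takes a genuinely different route from the paper's. The paper works in coordinates: using Corollary~\ref{C:G-mod} and Proposition~\ref{P:lt-orbit} it writes down an explicit versal ideal $\mathcal U'=\langle x_i-\sum_j b_{ij}x_1^j\rangle$ on $\overline{\mathcal I}_{q,n}=\Spec k[b_{ij}]$, translates it by $\GL_q$ to get $\mathcal U''$ on $\GL_q\times\overline{\mathcal I}_{q,n}$, descends through the $P_1$-action to an ideal sheaf $\mathcal U$ on $\Mqn$ that is ARR ``by construction,'' and only then identifies $\mathcal U$ with $\mathcal I_{G/G_1}(\mathfrak q_1)$ by comparing fibres. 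You instead work with $\mathcal I_{G/G_1}(\mathfrak q_1)$ directly, using the untwisting isomorphism $\mathcal I_{G/G_1}(A_{q,n})\simeq\cO_{\Mqn}\otimes_k A_{q,n}$ (which the paper also invokes, just before Definition~\ref{D:ARR}) to see that the fibre at $\bar g$ is $g\cdot\mathfrak q_1$, and then the ARR property reduces to the essentially tautological fact that $A_{q,n}\to A_{q,n}/g\cdot\mathfrak q_1\simeq k[z]/z^nk[z]$ is a cyclic regular representation with annihilator $g\cdot\mathfrak q_1$ (your citation of Lemma~\ref{tran} is slightly loose --- you need the easy converse direction, that $g(x_1)^{n-1}\notin g\cdot\mathfrak q_1$ because $x_1^{n-1}\notin\mathfrak q_1$ --- but the substance is right). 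Your version is shorter, coordinate-free, and dispenses with the descent argument and the final fibre comparison; what it does not produce is the explicit presentation $(\dagger)$ of $\mathcal U''$, which the paper reuses in the proof of Proposition~\ref{P:universal} and in the Hilbert-scheme identification, so the coordinate computation is not wasted effort in context. Your closing caveat is also well taken and applies equally to the paper: the quotient $G\times^{G_1}(A_{q,n}/\mathfrak q_1)$ is only locally free, not free, so Definition~\ref{D:ARR} must indeed be read Zariski-locally (or with a locally free target); the paper's ``clearly ARR by construction'' silently makes the same concession.
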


\begin{proof}
By Corollary~\ref{C:G-mod} and Proposition~\ref{P:lt-orbit}, we may naturally identify the homogeneous space $\overline{\mathcal I}_{q,n} := \mathcal{I}_{q,n}/(\mathcal{I}_{q,n}\cap G_1)$ with $$\Hom_k(\Omega_1, z^2k[z]) = \Spec k[b_{ij} \, | \, {2\le i \le q, 2 \le j \le n-1}]$$ where the coordinates $b_{ij}$ are the obvious ones determined by
\[
h(x_i) = \sum_{j=2}^{n-1} b_{ij}(h)z^j, \quad \forall h \in \Hom_k(\Omega_1, z^2k[z]).
\]
 Hence on $\overline{\mathcal I}_{q,n}$, we have an ideal
\[
\UI' := \left\langle x_i - \sum_{j=2}^{n-1}b_{ij}x_1^j \right\rangle_{2 \le i \le q} \subset \frac{k[b_{ij}, x_1,\dots,x_q]}{\mathfrak m_0^n}.
\]
The right hand side is the global section ring $\Gamma(\cO_{\overline{\mathcal I}_{q,n}})\otimes_k A_{q,n}$. Subsequently, a versal ideal sheaf $\UI''$ on $\GL_q(k) \times \overline{\mathcal{I}}_{q,n}$ is obtained by further moving $\UI'$ around by the $\GL_q(k)$ action:
\begin{equation}\label{E:U''}
\UI'' := \left\langle \sum_{l=1}^q a_{il}x_l - \sum_{j=2}^{n-1}b_{ij}\left(\sum_{l=1}^q a_{1l}x_l\right)^j \right\rangle_{2 \le i \le q} \,  \subset \,  \frac{k[a_{l \, m}, b_{ij}, x_2,\dots,x_q]}{\mathfrak m_0^n}  \tag{$\dagger$}
\end{equation}
where $k[GL_q(k)] = k[a_{l \, m} \, | \, 1 \le l,m \le q]$ so that $k[a_{l \, m}, b_{ij}, x_2,\dots,x_q]/\mathfrak m_0^n$ is the global section ring of $\cO_{\GL_q(k) \times \overline{\mathcal{I}}_{q,n}}\otimes_k A_{q,n}$.

Since $P_1$ acts diagonally on $\GL_q(k) \times \overline{\mathcal{I}}_{q,n}$ by $p.(g, v) = (g p^{-1}, p. v)$, and $\UI''$ is obtained by letting $GL_q(k)$ act on $\UI'$, $\UI''$  is plainly invariant under the $P_1$ action and descends to give an ideal sheaf $\UI$ on the quotient $\Mqn = \left(\GL_q(k) \times \overline{\mathcal{I}}_{q,n}\right)/P_1$
\[
0 \to \UI \to \mathcal O_{\Mqn}\otimes_k A_{q,n} \to \mathcal W \to 0.
\]
By construction, $\UI$ is clearly ARR over $\Mqn$. It is easy to see that $\UI$ is isomorphic to $\mathcal I_{G/G_1}(q_1)$: Since they are ideal subsheaves of the same sheaf $\cO_{\Mqn} \otimes_k A_{q,n}$ of algebras, it suffices to show that their fibres are exactly the same in the fibres of $\cO_{\Mqn} \otimes_k A_{q,n}$. Let  $\sigma \in G$. The fibre of $\mathcal I_{G/G_1}(q_1)$ at $\sigma G_1$ is by definition $\sigma (q_1) $. By our analysis in the previous sections, especially Proposition~\ref{P:lt-orbit},
\begin{equation}\label{E:UI}
\sigma (q_1) = (a_{ij}).\left\langle \sum x_i + s_i(x_1)\right\rangle_{i=2,\dots,q}
\end{equation}
for a unique $((a_{ij}), (b_{ij})) \in \GL_q\times^{P_1} \overline{\mathcal I}_{q,n}$ where $b_{ij}$ are determined by $s_i(x_1) = \sum_{j=2}^{n-1} b_{ij}x_1^j$. Comparing (\ref{E:UI}) with Equation~(\ref{E:U''}), we see immediately that $\sigma(q_1)$ is precisely the fibre of $\mathcal U$ at $\sigma G_1$.

\end{proof}

\begin{proposition}\label{P:universal} The ideal sheaf $\mathcal I_{G/G_1}(q_1) $ is universal over $\Mqn$.
\end{proposition}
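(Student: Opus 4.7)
My approach is to construct $f_{\mathcal I}$ locally using the product presentation $\Mqn \simeq \GL_q \times^{P_1} \overline{\mathcal I}_{q,n}$ (where $\overline{\mathcal I}_{q,n} := \mathcal I_{q,n}/(\mathcal I_{q,n}\cap G_1)$) together with the explicit versal ideal $\mathcal U''$ of Proposition~\ref{P:ARR}. Uniqueness is painless: at every closed point $s\in S$, the fibre $\mathcal I(s)$ is the annihilator of a regular representation, so by Proposition~\ref{P:equiv} and Lemma~\ref{tran} it equals $\alpha(\mathfrak q_1)$ for a unique coset $\alpha G_1 \in \GAqnk/G_1 = \Mqn$. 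This determines $f_{\mathcal I}$ on underlying sets; the standard argument with Artinian local test rings upgrades this to an equality of scheme morphisms.

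For existence, I proceed in two stages. First, the ``linear part'' of $\mathcal I$ defines a global morphism $\bar f_{\mathcal I}: S\to \bP^{q-1}$: the composition $\mathcal I \hookrightarrow \mathcal O_S\otimes A_{q,n} \twoheadrightarrow \mathcal O_S\otimes \Omega$ has image that is fibre-wise of rank $q-1$ (since $\mathcal I(s)$, being the annihilator of a cyclic representation, has codimension exactly one in $\Omega$), so it cuts out a rank-$(q-1)$ subbundle of $\mathcal O_S\otimes \Omega$, i.e., a morphism to $\mathrm{Gr}(q-1,\Omega)=\bP^{q-1}=\GL_q/P_1$. Second, Zariski-locally on $S$, pick a lift $A: U\to \GL_q$ of $\bar f_{\mathcal I}|_U$. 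After applying $A^{-1}$ we may assume $\mathcal I|_U$ has linear part $\mathcal O_U\otimes \Omega_1$. The family analogue of Proposition~\ref{P:lt-orbit}---in which the generating element $z$ is replaced by a section of $\mathcal O_U\otimes A_{q,n}/\mathcal I|_U$ restricting fibre-wise to a generating element, and the resulting linear map $\gamma$ has $\mathcal O_U$-regular matrix entries $b_{ij}\in \mathcal O_U$---produces a morphism $\sigma:U\to \overline{\mathcal I}_{q,n}$ such that $\mathcal I|_U = \langle x_i - \sum_{j\ge 2} b_{ij}x_1^j\rangle_{i\ge 2}$. The pair $\tilde f_U = (A,\sigma) : U \to \GL_q\times \overline{\mathcal I}_{q,n}$ satisfies $(\tilde f_U\times 1)^*\mathcal U''= \mathcal I|_U$ by the formula (\ref{E:U''}).

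Composing $\tilde f_U$ with the $P_1$-quotient yields $f_U: U \to \Mqn$ with $(f_U\times 1)^*\mathcal I_{G/G_1}(\mathfrak q_1)\simeq \mathcal I|_U$, by Proposition~\ref{P:ARR}. On an overlap $U\cap U'$, the two local lifts of $\bar f_{\mathcal I}$ to $\GL_q$ differ by a section of $P_1$; the uniqueness clause in the family version of Proposition~\ref{P:lt-orbit} then forces $\tilde f_U$ and $\tilde f_{U'}$ to differ by the diagonal $P_1$-action on $\GL_q\times \overline{\mathcal I}_{q,n}$, so the $f_U$'s agree on overlaps and glue to a global $f_{\mathcal I}: S \to \Mqn$. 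The main obstacle is exactly this family version of Proposition~\ref{P:lt-orbit}: one must show that the fibre-wise construction there (choose a regular element, isolate the quadratic correction $\gamma$) can be executed over an arbitrary Noetherian base, producing regular sections rather than merely pointwise data---the key point being that flatness of $\mathcal O_S\otimes A_{q,n}/\mathcal I$ combined with Nakayama's lemma lifts fibre-wise generating elements to honest sections of $\mathcal O_U\otimes A_{q,n}/\mathcal I|_U$, so that the $b_{ij}$ are regular functions on $U$.
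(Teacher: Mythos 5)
Your proposal is correct and is, at bottom, the same proof as the paper's: construct $f_{\mathcal I}$ Zariski-locally by putting $\mathcal I$ in the normal form $\langle x_j-\sum_l b_{jl}x_i^l\rangle_{j\neq i}$ with $b_{jl}$ regular functions, lift to $\GL_q\times\overline{\mathcal I}_{q,n}$, and glue using the $P_1$-ambiguity of the lift. The differences are organizational. You first extract a global morphism $\bar f_{\mathcal I}:S\to\bP^{q-1}$ from the linear part of $\mathcal I$ (over a non-reduced base the claim that constant fibre rank gives a subbundle needs a word of justification, most easily via the local freeness of $\mathcal V=\mathcal O_S\otimes A_{q,n}/\mathcal I$), whereas the paper skips this and takes a permutation matrix as the $\GL_q$-component of the local lift. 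More importantly, the ``family analogue of Proposition~\ref{P:lt-orbit}'' that you rightly flag as the crux is precisely what the paper proves rather than cites: it chooses a local section $v$ of $\pi_*\mathcal V$ with $x_i^{n-1}v$ non-vanishing, so that $v,x_iv,\dots,x_i^{n-1}v$ frame $\pi_*\mathcal V$ after shrinking, and then uses the fact that the commutant of the regular nilpotent Jordan block over any commutative ring consists of polynomials in it to produce the $b_{jl}\in\Gamma(\mathcal O_T)$. Writing out your deferred lemma would reproduce exactly this computation, so your argument is complete modulo that step. Your explicit attention to uniqueness is a good addition (the paper is silent on it), though the appeal to ``Artinian local test rings'' should be replaced by noting that the explicit local formula determines $f_{\mathcal I}$ as a scheme morphism, since pointwise agreement alone is insufficient over non-reduced $S$.
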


\begin{proof}
Let $S$ be a $k$-scheme and  $\mathcal I$ be an ideal of $ \mathcal O_S\otimes A_{q,n}$ ARR over $S$.
 Let $\mathcal V$ denote the quotient of $\mathcal O_S\otimes A_{q,n}$ by $\mathcal I$. Let $\pi : S\times \Spec A_{q,n} \to S$ be the projection and let $\mu_i  \in H^0(S, \pi_*End(\mathcal V))$ be the endomorphism of $\pi_*\mathcal V$ defined by multiplication by $x_i^{n-1}$. Since $\mathcal I$ is ARR, for each $s \in S$, there exists $i$ such that
\[
\mu_i|_s \ne 0 \in End(\pi_*\mathcal V)|_s.
\]
Hence there exists an open subvariety $T\ni s$ of  $S$ and a section $v \in \pi_*\mathcal V(T)$ such that $\mu_i|_t(v_t) \ne 0$ for all $t \in T$. At $s$, $\{v|_s, x_i.v|_s, x_i^2.v|_s, \dots, x_i^{n-1}.v|_s\}$ is a basis for $\mathcal V|_s \simeq k^n$. After shrinking $T$ if necessary, we may assume that $\{v, x_i.v, x_i^2.v, \dots, x_i^{n-1}.v\}$
is a framing of $\pi_*\mathcal V$ over $T$. That is, $x_i^\ell.v$'s give rise to an isomorphism
\[
\mathcal O_T^{\oplus n} \longrightarrow  \pi_*\mathcal V|_T
\]
over $T$. But with respect to this framing, multiplication by $x_i$ is represented by an $n\times n$ matrix $B$ with $1$'s on the subdiagonal and zeros elsewhere. Any matrix commuting with $B$ is easily shown to be a polynomial in $B$, and $\mu_j$'s commute with each other. So, we have over $T$,
\[
x_j = \sum_{l = 1}^{n-1} b_{jl}x_i^l \quad (\mbox{mod} \, \,  \mathcal I\otimes_{\cO_S}\cO_T), \quad b_{jl} \in \Gamma(\mathcal O_T), \quad j \ne i.
\]
By dimension reasons, $x_j - \sum_{l = 1}^{n-1} b_{jl}x_i^l$ generated $\mathcal I$ over $T$, and it induces a unique morphism $\psi_T : T \to \Mqn$ such that $(\psi_T\times 1)^*\UI = \mathcal I|_{T\times \Spec A_{q,n}}$. Indeed, we have a well-defined lifting $\tilde\psi_T: T \to \GL_n \times Hom_k(\Omega_1, z^2k[z])$ of $\psi_T$ given by
\[
t \mapsto (g, f_t)
\]
where $f$ is the homomorphism defined by $b_{jl}(t)$ and $g \in \GL_n$ is the permutation matrix $g.x_1 = x_i$, $g.x_i = x_1$ and $g.x_k = x_k$ for all $k \ne 1, i$. Then $f_t(\mathfrak q_1)$ is the ideal defined by $x_j - \sum_{l=1}^{n-1} b_{jl}(t)x_1^l$, $j \ne 1$, and $g.f_t(\mathfrak q_1)$ is precisely the ideal $\mathcal I|_{\{t\}\times \Spec A_{q,n}}$ which also is plainly seen to be equal to $\tilde\psi_T^*(\mathcal U'')$.

We claim that these $\psi_T$'s glue together to give the desired morphism $S \to \Mqn$. Indeed, consider $\psi_T$ and $\psi_{T'}$, given by $\tilde\psi_T(t) = (g, f_t)$ and $\tilde\psi_{T'}(t')=(g', f'_{t'})$. At any point $t \in T\cap T'$ in the intersection, we have $g.f_t(\mathfrak q_1) = \mathcal I_{\{t\}\times \Spec A_{q,n}} = g'.f'_t(\mathfrak q_1)$, and it follows that
\[
(g', f'_t) = (g p_t^{-1}, p_t. f_t)
\]
for some $T$-point $p$ of $P_1(T)$.
The dependence of $p_t$ on $t$ is certainly algebraic, and it follows that $\psi_T$ and $\psi_{T'}$ agree on $T\cap T'$, defining a morphism $T \cup T' \to \mathcal M_{q,n}$. Since $(\psi_T\times 1_{A_{q,n}})^*\mathcal U = \mathcal I_{T\times \Spec A_{q,n}}$ for each $T$, $\mathcal U$ is universal.
\end{proof}

\begin{theorem} \label{T:universal} The space $\Mqn$ is a fine moduli scheme for the moduli functor $\underline{\Mqn} : \mathrm{Sch}/k \to \mathrm{Sets}$ from the category of $k$-schemes to the category of sets, defined by
\[
\underline{\Mqn}(S) = \{ \mbox{Ideal sheaves $\mathcal I \subset \cO_S \otimes_k A_{q,n}$ ARR and flat over $S$} \}
\]
\end{theorem}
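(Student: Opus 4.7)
The plan is to deduce Theorem~\ref{T:universal} directly from Propositions~\ref{P:ARR} and~\ref{P:universal}, which together show that $\UI := \mathcal I_{G/G_1}(q_1)$ on $\Mqn$ is ARR and enjoys the expected universal property with respect to arbitrary ARR ideal sheaves. The two extra things to check are that $\underline{\Mqn}$ is actually a contravariant functor, and that $\UI$ is itself flat over $\Mqn$ so that $\UI \in \underline{\Mqn}(\Mqn)$.

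Functoriality is routine: given $f : T \to S$ and $\mathcal I \in \underline{\Mqn}(S)$, the pullback $(f\times 1)^*\mathcal I$ is flat over $T$ since flatness is preserved by base change, and if $\rho : \Aqn \otimes_k \cO_S \to \mathcal V$ exhibits $\mathcal I$ as ARR, then $(f\times 1)^*\rho$ exhibits $(f\times 1)^*\mathcal I$ as ARR because regularity of $\rho_b$ and the kernel identification are fibrewise properties that transfer from $f(t)$ to $t$. For flatness of $\UI$, observe that $\cO_{\Mqn}\otimes_k \Aqn/\UI$ is the induced sheaf $\mathcal I_{G/G_1}(\Aqn/q_1)$ coming from the $n$-dimensional $G_1$-representation $\Aqn/q_1$, hence is locally free of rank $n$ on the homogeneous space $\Mqn = G/G_1$; equivalently, each fibre is an $n$-dimensional cyclic module by Proposition~\ref{P:equiv}, so constancy of fibre dimension and coherence force local freeness. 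Together with Proposition~\ref{P:ARR}, this shows $\UI \in \underline{\Mqn}(\Mqn)$.

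Representability is then exactly Proposition~\ref{P:universal}: for any $\mathcal I \in \underline{\Mqn}(S)$ it produces a unique morphism $f_\mathcal I : S \to \Mqn$ with $(f_\mathcal I \times 1)^* \UI \simeq \mathcal I$. The natural transformation $\mathrm{Hom}(-,\Mqn) \to \underline{\Mqn}$ sending $f \mapsto (f \times 1)^*\UI$ is therefore an isomorphism of functors, which by definition makes $\Mqn$ a fine moduli scheme with universal family $\UI$.

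The only subtlety I foresee is book-keeping about the flatness hypothesis, since Proposition~\ref{P:universal} is stated under the ARR hypothesis alone. However, the local framing $\{v, x_i v, \dots, x_i^{n-1}v\}$ constructed inside the proof of that proposition in fact exhibits $\pi_*\mathcal V|_T$ as locally free of rank $n$; so ARR automatically implies flatness of the quotient, the flatness hypothesis in $\underline{\Mqn}$ is redundant given ARR, and no compatibility is lost on either side of the bijection. This is reassuring rather than obstructive, and I do not anticipate any further difficulties.
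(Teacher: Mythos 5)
Your proposal is correct and follows exactly the paper's route: the paper's own proof of Theorem~\ref{T:universal} consists of the single sentence that Propositions~\ref{P:ARR} and~\ref{P:universal} combined give the assertion. The additional points you verify (functoriality of $\underline{\Mqn}$, flatness of the universal quotient via the local framing, and the observation that ARR already forces flatness) are sound and merely make explicit what the paper leaves implicit.
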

\begin{proof} The contents of Propositions~\ref{P:ARR} and \ref{P:universal} combined give the assertion of the theorem.
\end{proof}

\def\Nrqn{\mathcal N^r_{q,n}}

There is a natural set map $\pi: \Nrqn \to \Mqn$ that sends $(N_1, \dots, N_q)$ to the kernel of $\rho :  k[x_1, \dots, x_q] \to \End(k^n)$, $\rho(x_i) = N_i$.  From the universality of $\Mqn$, it follows immediately that this is a morphism of varieties:
\begin{corollary} There is a $\GL_n$ invariant orbit map $\pi: \mathcal N^r_{q,n} \to \Mqn$.
\end{corollary}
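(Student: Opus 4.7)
The plan is to apply the universal property of $\mathcal M_{q,n}$ established in Theorem~\ref{T:universal}. On $\Nrqn$ we have a tautological $q$-tuple $(N_1^{univ},\dots,N_q^{univ})$ of commuting nilpotents, where each $N_i^{univ}$ is an $n\times n$ matrix of functions on $\Nrqn$. Define the tautological representation
\[
\rho^{univ}: A_{q,n}\otimes_k \cO_{\Nrqn} \longrightarrow \End(k^n)\otimes_k \cO_{\Nrqn}, \qquad \rho^{univ}(x_i) = N_i^{univ},
\]
and let $\mathcal I^{univ} := \ker(\rho^{univ})$. My goal is to check that $\mathcal I^{univ}$ is ARR and flat over $\Nrqn$, apply Theorem~\ref{T:universal} to produce a classifying morphism $\pi: \Nrqn \to \Mqn$, and verify that this morphism is the set-theoretic map given in the statement and is $\GL_n$-invariant.

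The ARR condition is immediate from the construction: at any point $s = (N_1,\dots,N_q) \in \Nrqn$, the fibre $\mathcal I^{univ}\otimes k(s)$ is the annihilator $\mathcal A(\rho_s)$ of the regular representation associated to $s$, so $\rho^{univ}$ itself supplies the map required in Definition~\ref{D:ARR}. Flatness is the main technical step. Fix $s_0 \in \Nrqn$ and choose $i$ with $N_i(s_0)^{n-1} \ne 0$; by semicontinuity this holds on an open $U \ni s_0$. Pick $v \in k^n$ with $N_i(s_0)^{n-1}v \ne 0$; after shrinking $U$, the sections $v, N_i^{univ} v, \ldots, (N_i^{univ})^{n-1} v$ are a frame of $\cO_U^{\oplus n}$. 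In that frame $N_i^{univ}$ is the constant subdiagonal Jordan block, and (by the elementary commutation argument used already in the lemma following Proposition~\ref{P:equiv}) each $N_j^{univ}$ is a polynomial $\sum_{l=1}^{n-1} b_{jl}(N_i^{univ})^l$ with uniquely determined $b_{jl} \in \Gamma(\cO_U)$. Thus
\[
\mathcal I^{univ}|_U = \left\langle\, x_j - \sum_{l=1}^{n-1} b_{jl}\, x_i^l \;\bigm|\; j \ne i\, \right\rangle,
\]
and the quotient is isomorphic to $\cO_U[x_i]/\langle x_i^n\rangle$, which is locally free of rank $n$. Hence $\cO_{\Nrqn}\otimes A_{q,n}/\mathcal I^{univ}$ is locally free, so $\mathcal I^{univ}$ is flat over $\Nrqn$.

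With ARR and flatness in hand, Theorem~\ref{T:universal} produces a unique morphism $\pi: \Nrqn \to \Mqn$ with $(\pi\times 1)^*\mathcal I_{G/G_1}(\mathfrak q_1) \simeq \mathcal I^{univ}$. Comparing fibres, the image $\pi(s)$ is the point of $\Mqn$ whose associated ideal is $\mathcal A(\rho_s)$, which is precisely the set-theoretic prescription. Finally, if $g \in \GL_n$ and $s = (N_1,\dots,N_q)$, then $g\cdot s = (gN_1g^{-1},\dots,gN_qg^{-1})$ induces the representation $\rho_{g\cdot s} = \mathrm{Ad}(g)\circ \rho_s$, whose kernel equals $\ker(\rho_s)$; hence $\pi(g\cdot s) = \pi(s)$, and $\pi$ is $\GL_n$-invariant.

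The only nontrivial step is the flatness verification, which I expect to be the main obstacle; everything else is either a direct unwinding of definitions or a formal consequence of Theorem~\ref{T:universal}.
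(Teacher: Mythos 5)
Your proposal follows essentially the same route as the paper: both define the tautological evaluation map $\cO_{\Nrqn}\otimes_k A_{q,n}\to\cO_{\Nrqn}\otimes_k\End(V)$, take its kernel as an ARR ideal sheaf, invoke the universal property (Theorem~\ref{T:universal}) to obtain the morphism, and deduce $\GL_n$-invariance from the fact that conjugate representations have equal annihilators (Proposition~\ref{P:equiv}). Your explicit verification of flatness via the local frame $v, N_i v,\dots,N_i^{n-1}v$ is a welcome addition that the paper leaves implicit, but it does not change the argument's structure.
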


\begin{proof}  
Consider the natural map
\[
\cO_{\mathcal N^r_{q,n}}\otimes_k A_{q,n} \to \cO_{\mathcal N^r_{q,n}}\otimes_k \End(V)
\]
defined by sending $f(x_1,\dots,x_q)\in \cO_{\mathcal N^r_{q,n}}(U)[x_1,\dots,x_q]$ to $f(N_1,\dots,N_q)$ where $(N_1,\dots,N_q)$ are by abuse of notation regarded as sections of $\cO_{\mathcal N^r_{q,n}}$ over a given open set $U \subset \Nrqn$. The kernel $\mathcal K$ of this map is an ideal sheaf ARR over $\Nrqn$ giving rise to a morphism $\Nrqn \to \Mqn$, which is constant on $\GL_n$ orbits due to Proposition~\ref{P:equiv}. By the universality of $\Mqn$ (Theorem~\ref{T:universal}), we conclude that $\pi$ is algebraic. 

\end{proof}

\section{The moduli space as an open subscheme of a punctual Hilbert scheme}\label{S:hilbert}

\def\bP{\mathbb P}

Let $X_0, \dots, X_q$ denote a set of homogeneous coordinates for $\bP^q$, $[0] := [1, 0, \dots, 0]$ and $x_i = X_i/X_0$ be the affine coordinates of the affine chart $\{X_0 \ne 0\}$. In this section, we shall consider the relation between $\Mqn$ and the Hilbert scheme $\Hilb^n\bP^q$ of length $n$ zero dimensional subschemes of $\bP^q$. The Hilbert scheme is defined by its functor. That is, for any scheme $S$ over $k$, we have
\[
\Hom(S, \Hilb^n\bP^q) = \{ Z \subset \bP^q_S \, | \, \mbox{$Z$ is surjective, finite flat over $S$ of degree $n$}\}.
\]
Let $\bP^{q(n)}$ be the $n$th symmetric product of $\bP^q$ that parametrizes length $n$ cycles of $\bP^q$. Recall the Hilbert-Chow morphism $\Psi : \Hilb^n\bP^q \to \bP^{q(n)}$ that maps a zero-dimensional subscheme $Z$ to its underlying cycle $[Z]$.

\begin{definition} The {\it punctual Hilbert scheme} $\Hilb^n_{[0]}\bP^q$ is the reduced fibre of the Hilbert-Chow morphism $\Psi$ over the cycle  $n\cdot [0]$.
\end{definition}
That is,  the punctual Hilbert scheme $Hilb^n_{[0]}\bP^{q}$ parametrizes the zero dimensional subschemes of length $n$ with support at the single point $[0] \in \bP^{q}$.
\def\cO{\mathcal O}

\begin{theorem}\label{T:hilb} $\mathcal M_{q,n}$ is isomorphic to an open subscheme of the punctual Hilbert scheme $\Hilb^n_{[0]}\bP^{q}$.
\end{theorem}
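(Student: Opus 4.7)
The plan is to identify $\Mqn$ with the open curvilinear locus of $\Hilb^n_{[0]}\bP^q$ by exhibiting mutually inverse morphisms via Theorem~\ref{T:universal} and the universal property of the Hilbert scheme. First I would construct a morphism $\Phi:\Mqn \to \Hilb^n_{[0]}\bP^q$: the universal ideal sheaf $\UI \subset \cO_{\Mqn}\otimes_k A_{q,n}$, lifted via $A_{q,n} = k[x_1,\dots,x_q]/\mathfrak m_0^n$, gives an ideal of $\cO_{\Mqn}\otimes_k k[x_1,\dots,x_q]$ defining a closed subscheme $\mathcal Z \subset \bA^q \times \Mqn \subset \bP^q\times \Mqn$, finite flat of degree $n$ over $\Mqn$ and set-theoretically supported on $\{[0]\}\times \Mqn$. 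The universal property of $\Hilb^n\bP^q$ yields a morphism $\Mqn \to \Hilb^n\bP^q$ whose composition with the Hilbert--Chow morphism is the constant map to $n\cdot [0]$; since $\Mqn$ is reduced, it factors through $\Hilb^n_{[0]}\bP^q$.

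Next I would pin down the image as an open subscheme. Write $H := \Hilb^n_{[0]}\bP^q$ and let $\mathcal W$ denote the universal rank-$n$ quotient sheaf on $H$, with $E_i \in \End(\mathcal W)$ the endomorphism given by multiplication by $x_i$. Set
\[
U := H \setminus \bigcap_{i=1}^{q} \{E_i^{n-1}=0\},
\]
which is an open subscheme of $H$. On closed points $U$ is exactly the curvilinear locus, namely the set of ideals $I \subset A_{q,n}$ of codimension $n$ with $A_{q,n}/I \cong k[z]/(z^n)$: one direction is the lemma following Definition~\ref{D:regular_rep}, while if $A_{q,n}/I$ is generated by some $\bar u$, then at least one $\bar x_j$ has nonzero linear coefficient in $\bar u$, forcing $\bar x_j^{n-1}\ne 0$. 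In particular $\Phi$ factors through $U$.

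Finally I would produce the inverse $\Psi: U\to \Mqn$ by applying Theorem~\ref{T:universal} to the kernel $\mathcal I_U$ of $\cO_U\otimes A_{q,n} \twoheadrightarrow \mathcal W|_U$, which is flat over $U$ (as the quotient $\mathcal W|_U$ is locally free of rank $n$) and ARR over $U$ by the defining property of $U$ together with the fibrewise equivalence above. The compositions $\Phi\circ\Psi$ and $\Psi\circ\Phi$ are then identities by tracing the respective universal objects (the universal $\UI$ on $\Mqn$ and the universal subscheme on $U$).

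The main obstacle is verifying the ARR condition of Definition~\ref{D:ARR} in families: one must construct, locally on $U$, a map $\rho : \cO_U\otimes A_{q,n} \to V \otimes \cO_U$ whose fibres are regular representations with kernel equal to $\mathcal I_U$, and verify compatibility with arbitrary base change so that $\Psi$ is well defined and inverse to $\Phi$. This reduces to the fibrewise equivalence of the second paragraph together with the standard openness of non-vanishing loci of sections of locally free sheaves.
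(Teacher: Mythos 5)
Your proposal follows essentially the same route as the paper: the universal ideal sheaf on $\Mqn$ induces a map to $\Hilb^n\bP^q$ factoring through the punctual Hilbert scheme by reducedness, the open locus is cut out by non-vanishing of the multiplication operators $x_i^{n-1}$ on the universal quotient, and the inverse comes from the universal property of Theorem~\ref{T:universal} applied to the restricted universal ideal. Your extra identification of this locus with the curvilinear locus and your flagging of the family-ARR verification are reasonable refinements of what the paper asserts more tersely, but the argument is the same.
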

\begin{proof}
The universal quotient $\cO_{\mathcal M_{q,n}}\otimes A_{q,n} \to \mathcal W:=\cO_{\mathcal M_{q,n}}\otimes A_{q,n} /\mathcal I_{G/G_1}(\mathfrak q_1) $ gives rise to a map $\eta: \mathcal M_{q,n} \to \Hilb^n\bP^q$, by (abusing notation and) identifying $x_i$ with the affine coordinates $x_i$ of $
\bP^q$ at $[0]$. Since $x_i^n = 0$ for all $i$, $\mathcal W$ is supported at $[0]$. Also,  $\Mqn$ is nonsingular (hence reduced in particular) since it is an affine space bundle over $\bP^{q-1}$. It follows that the map $\eta$ factors through the punctual Hilbert scheme $\pHilb\bP^q$.

Consider the universal quotient sequence
\[
0 \to \mathcal J  \to \cO_{\Hilb_{[0]}^n\bP^q}[x_1,\dots,x_q] \to \mathcal Q \to 0
\]
on the punctual Hilbert scheme. Multiplication by $x_i. : \mathcal Q \to \mathcal Q$ defines an $\cO_{\Hilb_{[0]}^n\bP^q}$-linear map and the locus where $x_i^{n-1}$ is identically zero is a closed subscheme, say $Z_i$, of the punctual Hilbert scheme. Then $U := \pHilb\bP^q \setminus  \bigcap_i Z_i$ is an open subscheme of $\pHilb\bP^q$.
Now, the restriction of $\mathcal J$ to $U$ is ARR, by construction of $U$.  Hence by Proposition~\ref{P:universal}, we have the corresponding morphism $f_{\mathcal I}: U \to \mathcal M_{q,n}$, providing an inverse to $\eta$.

\end{proof}


\def\gr{\mathrm {gr}} 
\section{$\Mqn \to \bP^{q-1}$ is not a vector bundle}
 Note that the action of $P$ on $\mathcal I_{q,n}/\mathcal I_{q,n}\cap G_1$ not being linear does not imply that $\Mqn = \GL_q \times^{P_1}  (\mathcal I_{q,n}/\mathcal I_{q,n}\cap G_1) \to \bP^{q-1}$ is not a vector bundle since the transition function may well become linear after passing to the quotient. Examples of algebraic  affine space fibre bundles that are not algebraic vector bundles are quite rare. We shall prove in this section that, as opposed to the result of the previous section, $\Mqn$ is not a vector bundle over $\bP^{q-1}$ by using the following theorem of Iarrobino:
 
 \begin{theorem}\label{T:Iarrobino}\cite[Theorem~1]{Iarrobino}  The variety parametrizing linear ideals $I \subset k[[x,y]]$ of co-length 4 is locally trivial over $\bP^1$ but is not a vector bundle.
 \end{theorem}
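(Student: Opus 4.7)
\emph{Proposal.} The plan is a direct analysis of the variety $\mathcal{V}$ of curvilinear ideals $I \subset k[[x,y]]$ of colength $4$, i.e.\ those with $k[[x,y]]/I \simeq k[[t]]/(t^4)$, which is the natural interpretation of Iarrobino's ``linear ideals'' here (the initial ideal of $I$ is generated by a single linear form and $\mathfrak m^4$). Such an $I$ is determined by its tangent direction in $\bP(\mathfrak m/\mathfrak m^2)\simeq \bP^1_k$ together with a two-parameter germ, so one obtains a natural map $\pi\colon \mathcal V\to \bP^1$. I would first establish Zariski-local triviality of $\pi$ by exhibiting explicit charts: over $U=\{[s:1]\}$, the ideals whose tangent is $\partial_y+s\,\partial_x$ are uniquely of the form
\[
  I_{(a_2,a_3)} \;=\; \bigl(x - sy - a_2 y^2 - a_3 y^3,\; y^4\bigr),
\]
giving $\pi^{-1}(U)\cong U\times \bA^2_{(a_2,a_3)}$, and symmetrically on $V=\{[1:t]\}$ with parameters $(b_2,b_3)$.

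Next I would compute the transition on $U\cap V$ by reducing the $V$-chart presentation of the same ideal back to the $U$-chart: inverting the curve parameter and using $y^4\in I$ to truncate higher terms yields
\[
  b_2 \;=\; -s^{-3}\,a_2, \qquad b_3 \;=\; -s^{-4}\,a_3 + 2\,s^{-5}\,a_2^2 .
\]
The linear part identifies the ``candidate'' underlying rank-$2$ bundle as the normal bundle of the canonical section $\sigma_0$ (given by $a_2=a_3=0$, well-defined globally since the quadratic term vanishes on $\sigma_0$), namely $N \cong \cO_{\bP^1}(3)\oplus\cO_{\bP^1}(4)$. The genuine $a_2^2$ term is what will produce the obstruction.

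To rule out a vector bundle structure I would argue: \emph{if} $\mathcal V$ were a rank-$2$ vector bundle over $\bP^1$, then by Grothendieck's theorem and the fact that the normal bundle along the zero section reconstructs the bundle itself, one would have $\mathcal V\cong N=\cO(3)\oplus\cO(4)$ over $\bP^1$. Such an isomorphism is witnessed by a pair of polynomial fiberwise automorphisms $\varphi_U,\varphi_V$ which conjugate the transition above to its linear part. At the infinitesimal level along $\sigma_0$ this produces a coboundary equation in $H^1(\bP^1, \mathrm{Sym}^2 N^\vee\otimes N)$, and the only summand where the cocycle is supported is
\[
  \mathrm{Sym}^2(\cO(3))^\vee\otimes \cO(4) \;=\; \cO(-6)\otimes\cO(4) \;=\; \cO(-2).
\]
A careful choice of trivializations for $\mathrm{Sym}^2 N^\vee\otimes N$ on $U$ and $V$ (taking into account $da_2=-s^3 db_2$ and $\partial_{a_3}=-s^{-4}\partial_{b_3}$, which yield $e_V = -s^{-2}e_U$ for the relevant basis) converts the cocycle into a nonzero multiple of $s^{-1}\in k[s,s^{-1}]$. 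Solving the resulting coboundary equation with polynomials $c(s)\in k[s]$, $d(t)\in k[t]$ is impossible, because the image of the Čech coboundary $(c,d)\mapsto c(s)+s^{-2}d(1/s)$ misses precisely the $s^{-1}$ monomial; this exhibits our class as a generator of $H^1(\bP^1,\cO(-2))\cong k$, hence nonzero.

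The hard part is not the cohomology vanishing/nonvanishing (which is elementary once set up) but the trivialization bookkeeping in the last step: the raw cross-term $2s^{-5} a_2^2\,\partial_{b_3}$ mixes a $U$-chart symmetric form with a $V$-chart vector, so to extract the correct class in $H^1(\bP^1,\cO(-2))$ one must first convert everything into a single trivialization of the line bundle $\cO(-2)$ and keep meticulous track of the sign. One must also verify that the infinitesimal obstruction genuinely obstructs a global isomorphism, which follows because any iso $\mathcal V\cong N$ would pull the standard uniform scaling on $N$ back to a $\bG_m$-action on $\mathcal V$ fixing $\sigma_0$ with weight-$1$ normal representation, and hence would have to kill the $a_2^2$-term at the 2-jet level --- which is precisely what the cohomology class obstructs.
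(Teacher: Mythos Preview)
The paper does not prove this statement; it is quoted from \cite{Iarrobino} and used as a black box. The only information the paper gives about Iarrobino's argument is the sentence following the theorem: the key ingredient is \cite[Lemma~3]{Iarrobino}, that the forgetful map $\pi_{4,3}\colon\mathcal M_{2,4}\to\mathcal M_{2,3}\simeq\cO_{\bP^1}(3)$ admits no section.

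Your direct obstruction argument is correct and is, at bottom, the same computation packaged differently. A section of $\pi_{4,3}$ amounts to polynomials $f_U(s,a_2)\in k[s,a_2]$ and $f_V(t,b_2)\in k[t,b_2]$ compatible with your transition formula; matching the $a_2^2$-coefficients yields precisely your coboundary equation $f_U^{(2)}(s)+s^{-2}f_V^{(2)}(s^{-1})=2s^{-1}$, which has no solution since the left side never hits the monomial $s^{-1}$. So Iarrobino's no-section lemma and your nonvanishing class in $H^1(\bP^1,\cO(-2))$ are the same obstruction. Two small fixes to your write-up: first, an isomorphism $\mathcal V\simeq E$ need not carry $\sigma_0$ to the zero section, so you should remark that translating by a global section of the vector bundle $E$ arranges this, after which indeed $E\simeq N_{\sigma_0}\mathcal V=\cO(3)\oplus\cO(4)$. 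Second, your closing $\bG_m$-paragraph is both unnecessary and has a gap (nothing forces the pulled-back scaling to fix $\sigma_0$ rather than some other section). The right observation is that when you allow the linear parts of $\phi_U,\phi_V$ to vary over $\Aut(\cO(3)\oplus\cO(4))$, the quadratic-level equation becomes $-q_1(s)=2\beta\,s^{-1}+s^{-2}w_1(s^{-1})$ with $\beta\in k^\times$ the $(2,2)$-entry of the linear part; this still has no solution with $q_1\in k[s]$, $w_1\in k[t]$, and higher-order terms of $\phi_U,\phi_V$ do not enter this equation at all. Hence the second-order obstruction is already a global obstruction, not merely an infinitesimal one.
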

Here, an ideal $I \subset \langle x,y \rangle$ is said to be {\it linear} if $I $ is not contained in $\langle x,y \rangle^2$. It is equivalent to $I$ being an annihilator of a regular representation (Definition~\ref{D:ARR})  since    $f \in I$ satisfies $f^{n-1} \ne 0$ in $k[[x,y]]/\langle x,y \rangle^n$ if and only if $f \not\in \langle x,y\rangle^2 $

The map from $Z$ to $\bP^1$ is given by sending $I$ to its associated graded ideal $\gr(I)$ which is linear and also of co-length 4. 
We note that $Z \to \bP^2$ is precisely our fibration $\mathcal M_{2,4} \to \mathcal M_{2,2}$ induced by the natural map $A_{2,4} \to A_{2,2}$. The key ingredient of the proof of the above theorem is that there is no section of $\mathcal M_{2,4} \to \mathcal M_{2,3} \simeq \mathcal T_{\bP^1}(+1) \simeq O_{\bP^1}(+3)$  \cite[Lemma~3]{Iarrobino}.

\def\End{\mathrm{End}}
\begin{theorem}\label{T:vector} $\Mqn \to \bP^{q-1}$ is not a vector bundle in the algebraic category. 
\end{theorem}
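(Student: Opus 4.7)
The plan is to argue by contradiction: assume that $\Mqn \to \bP^{q-1}$ is an algebraic vector bundle for some $q\ge 2$, $n\ge 4$, and derive that $\mathcal M_{2,4} \to \bP^1$ must then also be an algebraic vector bundle, contradicting Theorem~\ref{T:Iarrobino}. The reduction would proceed in two stages: first equip $\Mqn$ with a $\bG_m$-equivariant bundle structure and decompose it by weights, and then cut down to the case $(q,n) = (2,4)$ using the fixed locus of $\GL_{q-2}$ together with a $\bG_m$-equivariant quotient.

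First, I would upgrade the putative vector bundle structure to one that is equivariant for the natural $\bG_m$-scaling $\tau_t(x_i) = tx_i$ of $A_{q,n}$. Using the coordinates $b_{ij}$ of Proposition~\ref{P:lt-orbit}, a direct check shows that the $\bG_m$-fixed locus of $\Mqn$ is precisely the image of the canonical section $s_0 : \bP^{q-1} \to \Mqn$ sending a hyperplane $H \subset \Omega$ to the ideal $H\cdot A_{q,n}$ it generates. Given any vector bundle structure on $\Mqn$ with zero section $\sigma$, the section $s_0 - \sigma$ defines a translation automorphism of the total space after which $s_0$ becomes the zero section. Since the natural $\bG_m$-action then fixes the zero section, it acts algebraically on each fiber fixing a distinguished point, and by the classical linearizability of reductive group actions on affine space fixing a point, the action becomes fiberwise linear. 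The same coordinate computation shows $b_{ij}$ has weight $j-1$, so the weight decomposition yields an isomorphism $\Mqn \simeq \bigoplus_{w=1}^{n-2} E_w$ of $\bG_m$-equivariant vector bundles, with $\mathrm{rank}\, E_w = q-1$.

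Next, I would restrict to the line $L \simeq \bP^1 \subset \bP^{q-1}$ parametrizing hyperplanes of $\Omega$ that contain $\langle x_3,\ldots,x_q\rangle$, and invoke the $\GL_{q-2}$-action on $x_3,\ldots,x_q$. This subgroup fixes $L$ pointwise, commutes with $\bG_m$, and preserves $s_0$, so by the same linearization argument it acts fiberwise linearly on each $E_w|_L$. Decomposing $\Omega_1 = kx_2 \oplus \langle x_3,\ldots,x_q\rangle$ as a $\GL_{q-2}$-representation gives $E_w|_L \simeq E_w^{\mathrm{triv}} \oplus E_w^{\mathrm{std}}$ with ranks $1$ and $q-2$. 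Inspecting the $b_{ij}$-coordinates, the $\GL_{q-2}$-fixed locus of $\Mqn|_L$ coincides with the image of the natural closed embedding $\mathcal M_{2,n} \hookrightarrow \Mqn|_L$ induced by $A_{q,n} \twoheadrightarrow A_{2,n}$, so $\mathcal M_{2,n} \simeq \bigoplus_{w=1}^{n-2} E_w^{\mathrm{triv}}$ as algebraic vector bundles on $\bP^1$. Finally, the $\bG_m$-equivariant morphism $\mathcal M_{2,n} \to \mathcal M_{2,4}$ induced by $A_{2,n} \twoheadrightarrow A_{2,4}$, which in the $b$-coordinates is the linear projection $(b_{2,2},\ldots,b_{2,n-1}) \mapsto (b_{2,2},b_{2,3})$, kills the weight $\ge 3$ summands and realizes $\mathcal M_{2,4}$ as the vector bundle $E_1^{\mathrm{triv}} \oplus E_2^{\mathrm{triv}}$ on $\bP^1$, contradicting Theorem~\ref{T:Iarrobino}.

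The hard part will be the linearization step: justifying that, after appropriately modifying the putative vector bundle structure, both the natural $\bG_m$-action on $\Mqn$ and the $\GL_{q-2}$-action on $\Mqn|_L$ become fiberwise linear. The two key inputs will be the explicit identification of the $\bG_m$-fixed locus with the image of $s_0$, which pins down the zero section after translation, and the classical linearizability of algebraic actions of reductive groups on affine space fixing a distinguished point.
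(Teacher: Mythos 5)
Your endgame (reduce to $\mathcal M_{2,4}\to\bP^1$ and contradict Iarrobino) agrees with the paper's, but your route is genuinely different. The paper restricts the putative bundle to the line $\iota(\bP^1)\subset\bP^{q-1}$ via the embedding $\mathcal M_{2,n}\hookrightarrow\Mqn$, $(N_1,N_2)\mapsto(N_1,N_2,\dots,N_2)$ (made algebraic by the universal property of Proposition~\ref{P:universal}), then uses the smooth classification of Theorem~\ref{T:main} to identify $\mathcal M_{2,n}$ with $\oplus_j\cO_{\bP^1}(j+2)$, extracts a section of $\mathcal M_{2,n}\to\mathcal M_{2,3}$, and contradicts Iarrobino's Lemma~3 (no section of $\mathcal M_{2,4}\to\mathcal M_{2,3}$) rather than his Theorem~1. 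Your equivariant weight-decomposition argument avoids any appeal to the differentiable classification, and it confronts head-on a point the paper glosses over: for $q>2$ the full preimage $\varpi_n^{-1}(\iota(\bP^1))$ has rank $(q-1)(n-2)$ and is strictly larger than $\mathcal M_{2,n}$, so cutting down to the rank-$(n-2)$ copy of $\mathcal M_{2,n}$ really does require something like your $\GL_{q-2}$-isotypic argument.

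There is, however, a genuine gap at your central step. ``Classical linearizability of reductive group actions on affine space fixing a point'' is not a theorem; it is the Linearization Problem, and it is false in this generality (Schwarz's non-linearizable $O_2$-actions on $\bA^4$, Knop's extension to every non-abelian connected reductive group, and Asanuma's positive-characteristic examples even for $\bG_m$). So as written, neither the $\bG_m$ step nor, especially, the $\GL_{q-2}$ step is justified. What actually saves the argument is a fact you compute but never invoke: the central $\bG_m$ acts on the fiber coordinates $b_{ij}$ with weights $j-1\ge 1$, all \emph{positive}. A $\bG_m$-action on $\bA^N$ fixing a point at which all tangent weights are positive is linearizable by the graded-algebra argument (the coordinate ring is non-negatively graded with degree-zero part $k$; lift a homogeneous basis of $\mathfrak m/\mathfrak m^2$ and compare Hilbert series), and the same argument applied to $\varpi_{n*}\cO_{\Mqn}$ relative to the base, with the lifts chosen $\GL_{q-2}$-equivariantly using reductivity, simultaneously produces the weight subbundles $E_w$ and linearizes $\GL_{q-2}$ on $\Mqn|_L$. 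You also assert without proof that the projection $\mathcal M_{2,n}\to\mathcal M_{2,4}$ identifies $\mathcal M_{2,4}$ with $E_1^{\mathrm{triv}}\oplus E_2^{\mathrm{triv}}$; this needs the observation that its fibers coincide with the cosets of the weight-$\ge 3$ subbundle for the \emph{new} linear structure, which holds because the only weight-$\le 2$ monomials in the $b_{2,j}$ are $b_{2,2}$, $b_{2,3}$ and $b_{2,2}^2$. With these repairs your argument goes through, but the linearization principle you cite cannot be left as stated.
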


\begin{proof} Let $\varpi_n$ denote the fibration $\Mqn \to \bP^{q-1}$. It is induced by the natural projection $A_{q,n} = k[x_1,\dots,x_q]/\mathfrak m^n \to k[x_1,\dots,x_q]/\mathfrak m^2 = A_{q,2}$. Consider the set map
\[
\mathcal M_{2,n} \to \mathcal M_{q,n}
\]
sending a commuting pair $(N_1, N_2)$ to the $q$-tuple $(N_1,N_2,\dots,N_2)$. It can be readily seen that this map is algebraic: Recall the universal sheaf over $\mathcal M_{2,n}$ from Proposition~\ref{P:universal}. We denote it by $ \mathcal U$. Consider the associated ideal sheaf 
\[
\mathcal U' := \mathcal U + \cO_{\mathcal M_{2,n}}\langle x_3-x_2, \dots, x_q - x_2 \rangle \subset \cO_{\mathcal M_{2,n}}\otimes_k A_{q,n}
\]
Note that this annihilates regular representations of $A_{q,n}$: For any $[I] \in \mathcal M_{2,n}$, we have 
\[
\mathcal U' \otimes \kappa([I]) \simeq \mathcal U\otimes \kappa([I]) + \langle x_3-x_2, \dots, x_q - x_2 \rangle
  = I + \langle x_3-x_2, \dots, x_q - x_2 \rangle \subset k[x_1, \dots, x_q]
  \]
which annihilates a regular representation since $I \subset k[x_1,x_2]$ satisfies $I \not\subset \langle x,y\rangle^2$. 

By the universality (Proposition~\ref{P:universal}), $\mathcal U'$ induces a morphism $\psi: \mathcal M_{2,n} \to \Mqn$. Let $I \subset A_{2,n}$ be a co-length $n$ ideal annihilating a regular representation, and $(N_1,N_2)$ denote the commuting pair of nilpotents associated with $I$ i.e. $I$ is the kernel of $A_{2,n} \to \End(k^n)$ given by mapping $x_i$ to $N_i$. Obviously $I + \langle x_3 - x_2, \dots, x_q - x_2 \rangle$ is contained in the kernel of $A_{q,n} \to \End(k^n)$ given by $x_1 \mapsto N_1$ and $x_i \mapsto N_2$, $\forall i \ge 2$. Then it must equal the kernel since they are of the same co-length. It follows that  $\psi$ corresponds to the map $(N_1, N_2) \mapsto (N_1, N_2, N_2, \dots, N_2)$.

Suppose that $\varpi_n$ is a vector bundle. From the commutative square
\[
\xymatrix{
\varpi_n^{-1}(\iota(\bP^1)) = \mathcal M_{2,n} \ar[r]\ar[d] & \Mqn \ar[d]^-{\varpi_n} \\
\mathcal M_{2,2}\simeq \bP^1 \ar[r]_-{\iota} & \mathcal M_{q,2} \simeq \bP^{q-1}
}
\]
we conclude that $\mathcal M_{2,n} \to \bP^1$ is also a vector bundle. But then, $\mathcal M_{2,n}$ should be isomorphic to $\oplus_{j=1}^{n-1} T_{\bP^1}(+j) \simeq \oplus_{j=1}^{n-1} \cO_{\bP^1}(j+2)$ by Theorem~\ref{T:main}. In this case, the projection $\pi_{n,3}: \Mqn \to \mathcal M_{q,3}$ induced by $A_{q,n} \to A_{q,3}$ is simply the bundle projection $\oplus_{j=1}^{n-1} \cO_{\bP^1}(j+2) \to  \cO_{\bP^1}(+3)$, hence admits a section $\sigma$. Now, consider the following diagram:
\[
\xymatrix{
\mathcal M_{2,n} \ar[r]^-{\pi_{n,4}} \ar[dr]^-{\pi_{n,3}} & \mathcal M_{2,4} \ar[d]^-{\pi_{4,3}} \\
& \mathcal M_{2,3} \ar@/^2pc/[ul]^-{\sigma}
}
\]
which is commutative since $\pi_{n,3} = \pi_{4,3}\circ \pi_{n,4}$. Hence we have a section $\pi_{n,4}\circ \sigma$ of $\pi_{4,3}$, contradicting \cite[Lemma~3]{Iarrobino}. 

\end{proof}

\bibliographystyle{alpha}
\bibliography{nilpotents}

\end{document}